\documentclass[11pt]{article}
\usepackage{latexsym,bm}
\usepackage{mathrsfs}
\usepackage{amsmath,amsthm}
\usepackage{graphicx}
\usepackage{amssymb}
\usepackage{CJK}
\usepackage{color}
\usepackage{cite}
\usepackage{bm}
\usepackage{array}
\usepackage{stfloats}
\usepackage[colorlinks,
            linkcolor=blue,       
            anchorcolor=blue,  
            citecolor=blue,        
            ]{hyperref}
\usepackage{caption}
\usepackage{graphicx}
\usepackage{epstopdf}
\theoremstyle{plain}
\newtheorem{thm}{Theorem}[section]
\newtheorem{cor}[thm]{Corollary}
\newtheorem{lem}[thm]{Lemma}
\newtheorem{prop}[thm]{Proposition}
\theoremstyle{definition}

\theoremstyle{plain}

\newtheorem{claim}{Claim}
\theoremstyle{plain}
\newtheorem{conj}{Conjecture}

\theoremstyle{plain}

\theoremstyle{plain}

\theoremstyle{plain}

\DeclareGraphicsExtensions{.eps,.eps.gz}
\DeclareGraphicsRule{*}{eps}{*}{}
\oddsidemargin=0 cm
\topmargin=0 cm
\usepackage[top=2.5cm,bottom=2.5cm,left=2.8cm,right=2.2cm]{geometry}
\textwidth=16 true cm \textheight=23 true cm
\normalsize \rm
\parindent=16pt
\DeclareGraphicsRule{*}{eps}{*}{}  \linespread{1.2}

\allowdisplaybreaks[4]
\makeatletter
\@addtoreset{equation}{section}
\makeatother

 \usepackage{indentfirst}
 \setlength{\parindent}{1em}

\begin{document}
\begin{CJK}{GBK}{song}
\newcommand{\song}{\CJKfamily{song}}    
\newcommand{\fs}{\CJKfamiwly{fs}}        
\newcommand{\kai}{\CJKfamily{kai}}      
\newcommand{\hei}{\CJKfamily{hei}}      
\newcommand{\li}{\CJKfamily{li}}        

\begin{center}
{{\huge Solution to a conjecture on resistance distances of block tower graphs }} \\[18pt]
{\Large Wensheng Sun$^{1}$, Yujun Yang$^{2}$, Wuxian Chen$^{1}$,\footnotetext{*Corresponding author at E-mail address: shjxu@lzu.edu.cn } Shou-Jun Xu$^{1}$* }\\[6pt]
{ \footnotesize  $^{1}$School of Mathematics and Statistics, Gansu Center for Applied Mathematics, Lanzhou University, Lanzhou 730000 China\\
$^{2}$School of Mathematics and Information Sciences, Yantai University, Yantai 264005 China}
\end{center}
\vspace{1mm}
\begin{abstract}
Let $G$ be a connected graph. The resistance distance between two vertices $u$ and $v$ of $G$, denoted by $R_{G}[u,v]$, is defined as the net effective resistance between them in the electric network constructed from $G$ by replacing each edge with a unit resistor. The resistance diameter of $G$, denoted by $D_{r}(G)$, is defined as the maximum resistance distance among all pairs of vertices of $G$. Let $P_n=a_1a_2\ldots a_n$ be the $n$-vertex path graph and $C_{4}=b_{1}b_2b_3b_4b_{1}$ be the 4-cycle. Then the $n$-th block tower graph $G_n$ is defined as the the Cartesian product of $P_n$ and $C_4$, that is, $G_n=P_{n}\square C_4$. Clearly, the vertex set of $G_n$ is $\{(a_i,b_j)|i=1,\ldots,n;j=1,\ldots,4\}$. In [Discrete Appl. Math. 320 (2022) 387--407], Evans and Francis proposed the following conjecture on resistance distances of $G_n$ and $G_{n+1}$:
\begin{equation*}
\lim_{n \rightarrow \infty}\left(R_{G_{n+1}}[(a_{1},b_1),(a_{n+1},b_3)]-R_{G_{n}}[(a_{1},b_1),(a_{n},b_3)]\right)=\frac{1}{4}.
\end{equation*}

In this paper, combining algebraic methods and electrical network approaches, we confirm and further generalize this conjecture. In addition, we determine all the resistance diametrical pairs in $G_n$, which enables us to give an equivalent explanation of the conjecture.

\vspace{1mm}
\noindent {\bf Keywords:} Cartesian product; Resistance distance; Resistance diameter; Asymptotic property; Principle of substitution
\end{abstract}

\section{Introduction}\label{section1}
As an important intrinsic graph metric, the distance functions on graphs have always attracted wide-spread attention and research. The most fundamental and widely used distance function defined on graphs is the shortest-path distance, or simple distance, where the \emph{distance} between two vertices $u$ and $v$, denoted by $d_G[u,v]$, is defined as the length of a shortest-path connecting $u$ and $v$ in $G$. Three decades ago, Klein and Randi\'{c} \cite{djk4} proposed another novel distance function which originates from electrical networks, named resistance distance. The \emph{resistance distance} between two vertices $u$ and $v$ of $G$, denoted by $R_{G}[u,v]$, is defined to be the effective resistance among nodes $u$ and $v$ in the corresponding electrical network (based on Ohm's and Kirchhoff's laws) constructed from $G$ by replacing each edge with a unit resistor. Lov\'{a}sz and Vesztergombi showed that the resistance distance is a harmonic function defined on graphs \cite{llo}. For the tree graph $T$,  the resistance distance between any two vertices is equivalent to the distance between them \cite{djk4}, i.e. $R_{T}[u,v]=d_T[u,v]$ for each pair of vertices $u,v\in V(T)$. From this perspective, resistance distance extends the concept of simple distance in some certain extent. More importantly, resistance distance seems to be more suitable than (shortest-path) distance to analyze the wave- or fluid-like communication among chemical molecules or  in the complex networks. For this reason, resistance distances have been widely studied in mathematical, physical and chemical literatures, see recent papers \cite{aaz2,ysun2,ppm2,agh,eje,wba,kde,jli,wsa,ysh,jge} and references therein.

The definition of shortest path distance on graphs, which leads to an elementary and widely studied parameter: diameter.   The \emph{diameter} of $G$, denoted by $D(G)$, is defined as the maximum distance between all pair of vertices in $G$. Inspired by the definition of diameter,  some scholars began to consider the \emph{resistance diameter} $D_{r}(G)$ of $G$, i.e. the maximum resistance distances between all pairs of vertices in $G$. We call vertex pair $u, v \in V(G)$ a \emph{resistance diametrical pair} if $R_{G}[u,v]=D_{r}(G)$, and use $RD(G)$ to denote the set of all resistance diametrical pairs in $G$. In 2020, Sardar et al. \cite{mss} took lead in studying the resistance diameter and gave an explicit combinatorial formula for calculating resistance diameter on hypercubes. Then, Li et al. \cite{yli} determined  resistance diameters of the two product operations(Cartesian and lexicographic product) of paths.  Motivated by the relationship between the diameter of a graph and that of its line graph given in \cite{kns}, Xu et al. \cite{sxu} also studied the relationship between the resistance diameter of graph and its line graph. Later, Sun and Yang \cite{wsu} confirmed the conjecture on resistance diameter of lexicographic product of paths which proposed in \cite{yli}. More recently, Vaskouski and Zadarazhniuk \cite{mva} studied the resistance diameters of cayley graphs on irreducible complex reflection groups.

\begin{figure*}[h]
  \setlength{\abovecaptionskip}{-0.1cm} %
  \centering
 $$\includegraphics[width=4.5in]{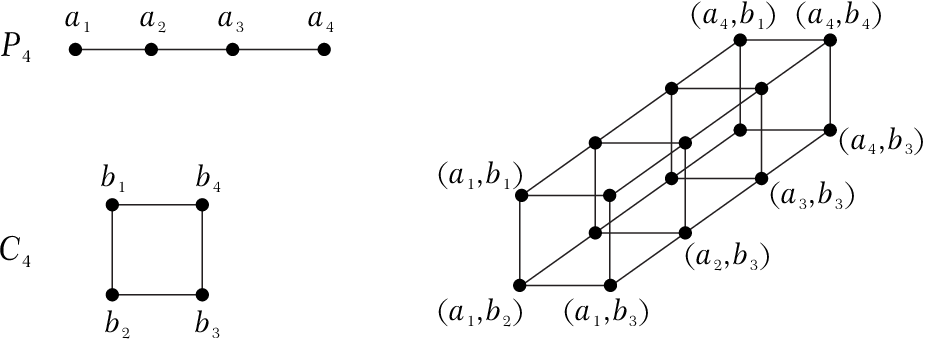}$$\\
 \caption{The block tower graph $G_4=P_{4}\square C_4$ and its vertex labelling.}     \label{Fig.1}
\end{figure*}

In this paper, we devote ourselves to a conjecture concerning resistance distances of the Cartesian product of path $P_n$ and cycle $C_4$. For a connected graph $G$, we use $u\sim v$ to denote $u$ and $v$ are adjacent in $G$. The \emph{Cartesian product} (or \emph{simple product}) \cite{sab} of two graphs $G$ and $H$, denoted by $G\square H$, is the graph with vertex set $V(G)\times V(H)$, and $(u,x)\sim (v,y)$ in $G\square H$ if and only if $u=v$ and  $x\sim y$ in $H$, or $x=y$ and $u\sim v$ in $G$. Let $P_n=a_1a_2\ldots a_n$ be the $n$-vertex path graph  and $C_{4}=b_{1}b_2b_3b_4b_{1}$ be the 4-cycle. Then the $n$-th block tower graph $G_n$ is defined as the Cartesian product of $P_n$ and $C_4$ with vertex set $\{(a_i,b_j)|i=1,\ldots,n;j=1,\ldots,4\}$, that is, $G_n=P_{n}\square C_4$. As an illustrative example, the block tower graph $G_4$ is given in Figure \ref{Fig.1}. Recently, Evans and Francis \cite{eje} gave a summary survey of methods used to calculate resistance distance in structured graphs. In addition, they proposed the following conjecture on an asymptotic formula for the resistance distances of $G_n$ and $G_{n+1}$: (see Conjecture 4.6 in \cite{eje})

\begin{conj}\label{conj1} Let $G_n=P_{n}\square C_4$. Then
$$\lim_{n \rightarrow \infty}\big(R_{G_{n+1}}[(a_{1},b_1),(a_{n+1},b_3)]-R_{G_{n}}[(a_{1},b_1),(a_{n},b_3)]\big)=\frac{1}{4}.$$
\end{conj}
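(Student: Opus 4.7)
My plan is to exploit the $C_4$-symmetry $b_2\leftrightarrow b_4$, which fixes both terminals $(a_1,b_1)$ and $(a_n,b_3)$, to reduce $G_n$ to a much smaller three-column network, and then extract the asymptotic behavior of $R_n:=R_{G_n}[(a_1,b_1),(a_n,b_3)]$ from a transfer-matrix analysis. Injecting a unit current at $(a_1,b_1)$ and extracting it at $(a_n,b_3)$, the potentials at $(a_i,b_2)$ and $(a_i,b_4)$ coincide for each $i$, so by the principle of substitution these pairs may be identified. The resulting reduced graph $\widetilde{G}_n$ has columns $L_i,M_i,R_i$ for $1\le i\le n$; the vertical bonds $L_i\!-\!M_i$, $M_i\!-\!R_i$ and the horizontal bonds $M_i\!-\!M_{i+1}$ each carry conductance $2$ (two parallel unit edges), while $L_i\!-\!L_{i+1}$ and $R_i\!-\!R_{i+1}$ carry conductance $1$, and $R_n$ equals the effective resistance between $L_1$ and $R_n$ in $\widetilde{G}_n$.

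Writing $(\ell_i,m_i,r_i)$ for the potentials, Kirchhoff's laws at internal columns $2\le i\le n-1$ produce the second-order vector recurrence $v_{i+1}=Av_i-v_{i-1}$ with
\[
A=\begin{pmatrix}4&-2&0\\-1&4&-1\\0&-2&4\end{pmatrix}.
\]
A short calculation gives $\det(A-xI)=(2-x)(4-x)(6-x)$ with eigenvectors $(1,1,1)^T$, $(1,0,-1)^T$, $(1,-1,1)^T$, so introducing the decoupled modes
\[
\gamma_i=\ell_i+2m_i+r_i,\qquad \alpha_i=\ell_i-r_i,\qquad \beta_i=\ell_i-2m_i+r_i
\]
splits the recurrence into three scalar second-order recurrences with characteristic polynomials $x^2-2x+1$, $x^2-4x+1$, $x^2-6x+1$, i.e.\ a double root $1$ for $\gamma$ and reciprocal pairs $\{2\pm\sqrt{3}\}$, $\{3\pm2\sqrt{2}\}$ for $\alpha,\beta$. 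The source/sink Kirchhoff equations at $i=1,n$ translate into two linear boundary conditions on each mode.

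The reflection $(a_i,b_1)\leftrightarrow(a_{n+1-i},b_3)$ is an automorphism of $G_n$ that swaps the two terminals and hence negates the current; after grounding, it forces $\alpha_i=\alpha_{n+1-i}$, $\beta_i=-\beta_{n+1-i}$, and $\gamma_i=-\gamma_{n+1-i}$. Combined with the boundary conditions, this pins $\gamma$ as the arithmetic progression $\gamma_i=\tfrac{n+1}{2}-i$, so $\gamma_1-\gamma_n=n-1$, while $\alpha$ and $\beta$ admit explicit closed forms; because their characteristic roots are reciprocal with $r_+>1>r_-=1/r_+$, the coefficients of the unstable $r_+^i$ pieces decay like $r_+^{-n}$, and $\alpha_1,\beta_1$ converge as $n\to\infty$ to the finite limits $1/(r_+^\alpha-1)=(\sqrt{3}-1)/2$ and $1/(r_+^\beta-1)=(\sqrt{2}-1)/2$ respectively. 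Reconstructing $\ell_i=(\gamma_i+\beta_i)/4+\alpha_i/2$ and $r_i=(\gamma_i+\beta_i)/4-\alpha_i/2$ and using the reflection symmetry to halve the unknowns yields
\[
R_n=\ell_1-r_n=\frac{\gamma_1-\gamma_n}{4}+\frac{\beta_1-\beta_n}{4}+\frac{\alpha_1+\alpha_n}{2}=\frac{n-1}{4}+\frac{\beta_1}{2}+\alpha_1,
\]
so $R_{n+1}-R_n=\tfrac14+O(r_+^{-n})\to\tfrac14$, which is the conjecture.

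The main obstacle is algebraic bookkeeping rather than any deep difficulty: carrying out the symmetry reduction carefully (in particular, tracking the doubled conductances in $\widetilde{G}_n$), diagonalizing the $3\times 3$ transfer matrix cleanly, and imposing the Dirichlet-type boundary conditions alongside the reflection symmetry so that the exponentially growing modes $r_+^i$ cancel. Once the decomposition into three decoupled modes is in hand the asymptotic conclusion is essentially immediate, and the factor $\tfrac14$ has a transparent interpretation: it is the asymptotic slope of the trivial $(1,1,1)$-eigenmode, reflecting the fact that in the long-path limit the four original columns of $C_4$ effectively share the current equally.
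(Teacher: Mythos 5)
Your argument is correct, and it is genuinely different from the paper's. The paper works through the Yang--Klein spectral formula for Cartesian products (Theorem \ref{thm3.1}): it writes $R_{G_n}[(a_1,b_1),(a_n,b_3)]$ as $\frac{n-1}{4}$ plus spectral sums over the path Laplacian, recognizes those sums as resistance distances in weighted fan graphs $C^{m}_{P_n}$ (Lemma \ref{lem3.6}), and controls their $n\to\infty$ behaviour by iterated $\Delta$--$Y$ reduction of the fan (Lemmas \ref{lem3.1}--\ref{lem3.11}); this machinery lets the authors prove the stronger statement for $P_n\square Q_k$ with limit $2^{-k}$. You instead quotient by the $b_2\leftrightarrow b_4$ symmetry, set up the column-to-column transfer recurrence $v_{i+1}=Av_i-v_{i-1}$, and diagonalize: I checked that $\det(A-xI)=(2-x)(4-x)(6-x)$, that $(1,2,1)$, $(1,0,-1)$, $(1,-2,1)$ are the corresponding left eigenvectors (so your modes $\gamma,\alpha,\beta$ do decouple, including at the boundary, since the boundary operator is $A-I$ plus the source term), that the reflection $(a_i,b_j)\mapsto(a_{n+1-i},b_{j+2})$ gives the stated parities, and that the resulting identity $R_n=\frac{n-1}{4}+\frac{\beta_1}{2}+\alpha_1$ with $\alpha_1\to\frac{\sqrt{3}-1}{2}$, $\beta_1\to\frac{\sqrt{2}-1}{2}$ is consistent (e.g. for $n=2$ it returns $\frac14+\frac1{12}+\frac12=\frac56$, the antipodal resistance of $Q_3$). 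What your route buys is an elementary, self-contained proof with an exact closed form and an explicit exponential error rate $O((2+\sqrt 3)^{-n})$, plus a transparent reading of the constant $\frac14$ as the slope of the degenerate eigenvalue-$2$ mode; it would also immediately give the monotonicity in Lemma \ref{lem4.6}. What it does not give as stated is the paper's generalization to $P_n\square Q_k$ (Theorem \ref{Tm4.1}), though your reduction would extend by decomposing over the $2^k$ characters of $Q_k$ instead of the single $b_2\leftrightarrow b_4$ involution. The only things left as bookkeeping --- solving the two boundary conditions per mode together with the reflection parity, and confirming that the coefficient of the growing root is $O(r_+^{-n})$ --- are routine and do work out.
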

It should be mentioned that only using the Kirchhoff matrix or Kirchhoff's current law to solve this conjecture would become overly complicated, due to the difficult-to-handle relationship between eigenvalues (or eigenvectors) in resistance distance formulas of $G_n$ and $G_{n+1}$. To this end, we combine electrical network methods and algebraic methods, then  confirm a generalized version of this conjecture (see Theorem \ref{Tm4.1}). In addition, we also determine all the resistance diametrical pairs in $G_n$, which enables us to give an equivalent explanation for this conjecture from the direction on resistance diameter.
\section{Preliminaries} \label{section2}

In this section, we first give some necessary definitions and useful transformations in electrical network theory. Since resistance of a resistor on network can always be viewed as a weighted edge of its corresponding graph, we do not distinguish between electrical networks and corresponding graphs. For convenience, if network $N$ is clear from the context, we use the notation $r_{N}[u, v] := r[u, v]$ for the resistance of the resistor on $e=[u,v]$.
Now we introduce some important principles in electrical networks. The following series and parallel principles are the most commonly used tools.

\textbf{Series principle}. If there are $n$ resistors with resistances of $r_{1}$, $r_{2}$,..., $r_{n}$ in series between two nodes $a$ and $b$, then we can use a single resistor with resistance $r_{ab}$ to replace those resistors, where $r_{ab} = r_{1} + r_{2} + \cdots + r_{n}$.

\textbf{Parallel principle}. If there are $n$ resistors with resistances of $r_{1}$, $r_{2}$,..., $r_{n}$ in parallels between two nodes $a$ and $b$, then we can use a single resistor with resistance $r_{ab}$ to replace those resistors, where $r_{ab} = (\frac{1}{ r_{1}} + \frac{1}{ r_{2}} + \cdots + \frac{1}{ r_{n}})^{-1}$.

For a connected graph $G$, a \textit{cut vertex} is a vertex such that its deletion results in a disconnected graph. Then $G$ is called \emph{nonseparable} if it is connected, nontrivial and contains no cut vertex \cite{dbw}.
Then a \emph{block} in  $G$ is a maximal nonseparable subgraph. For graphs with a cut vertex, the following principle can simplify the calculation of resistance distances.

\textbf{Principle of elimination}\cite{djk4}. Let $N$ be a connected network and $B$ be a block of $N$ containing exactly one cut vertex $x$ of $N$. If $H$ is the network obtained from $N$ by deleting all the vertices of $B$ except $x$, then for $u,v\in V(H)$, $R_{H}[u, v] = R_{N}[u, v]$.

Suppose that $N$ and $M$ are two networks and vertex set $S\subseteq V(N)\cap V(M)$.  Then $N$ and $M$ are said to be \textit{$S$-equivalent} if $R_{N}[u, v]= R_{M}[u, v]$ for all $u,v\in S$.

\textbf{Principle of substitution}. Let $H$ be a subnetwork of $N$. If $H$ is $V(H)$-equivalent to $H^*$, then the network $N^*$ obtained from $N$ by replacing $H$ with $H^*$
satisfies $R_N(u,v)=R_{N^*}(u,v)$ for all $u,v\in V(N)$.

When we perform equivalent network substitutions, a well-known common  used tool is the \textit{$\Delta-Y$ transformation}\cite{aek}. The conversion process is shown in Figure \ref{Fig.2.}. If the resistances of resistors in $\Delta$ and $Y$-networks satisfy
\begin{equation}\label{eq2.1}
r_a=\frac{r_1r_3}{r_1+r_2+r_3},\quad r_b=\frac{r_1r_2}{r_1+r_2+r_3},\quad r_c=\frac{r_2r_3}{r_1+r_2+r_3}.
\end{equation}
\begin{figure*}[ht]
  \setlength{\abovecaptionskip}{-0.1cm} %
  \centering
 $$\includegraphics[width=3.2in]{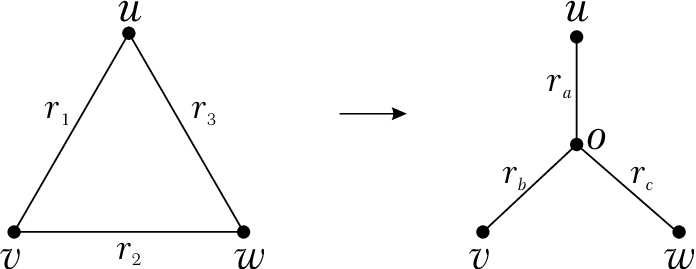}$$\\
 \caption{$\Delta-Y$ transformation.}     \label{Fig.2.}
\end{figure*}
Then $\Delta$ and $Y$-networks are $\{u,v,w\}$-equivalent.

Let $K_{m,n}$ be a complete bipartite graph with partial sets $V(\overline{K_m})=\{x_1,x_2,\cdots,x_m\}$ and $V(\overline{K_n})=\{y_1,y_2,\cdots,y_n\}$. Using Forster's first theorem, Klein \cite{djk2} gave the resistance distances between any two vertices in $K_{m,n}$.
\begin{thm}\cite{djk2}\label{tm2.1}
For a complete bipartite network graph $K_{m,n}$, let $u$ and $v$ be distinct nodes in $K_{m,n}$. Then
 \end{thm}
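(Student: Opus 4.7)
The plan is to invoke Foster's first theorem together with the strong vertex- and edge-transitivity of $K_{m,n}$. First I would apply Foster's identity $\sum_{[u,v]\in E(G)}R_G[u,v]=|V(G)|-1$, which for $G=K_{m,n}$ yields the value $m+n-1$. Since the automorphism group of $K_{m,n}$ acts transitively on its edge set, the resistance distance is constant across the $mn$ edges, so for any adjacent pair $x_i\in V(\overline{K_m})$ and $y_j\in V(\overline{K_n})$ one obtains
$$R_{K_{m,n}}[x_i,y_j]=\frac{m+n-1}{mn}.$$

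For a pair of non-adjacent vertices, say $x_i,x_k\in V(\overline{K_m})$, I would set up a steady-state current flow: inject a unit of current at $x_i$ and extract it at $x_k$. The reflection symmetry swapping $x_i$ with $x_k$, together with the permutation symmetries on the remaining $m-2$ vertices of $V(\overline{K_m})$ and on the $n$ vertices of $V(\overline{K_n})$, forces all vertices of $V(\overline{K_n})$ to share one common potential, and the $m-2$ untouched vertices of $V(\overline{K_m})$ to share another. Anchoring the reference so that these common values vanish (equivalently, $V(x_i)+V(x_k)=0$), Kirchhoff's current law at $x_i$ becomes $1=n\cdot V(x_i)$, giving the potential drop
$$R_{K_{m,n}}[x_i,x_k]=V(x_i)-V(x_k)=\frac{2}{n}.$$
The identical argument applied in the other partition yields $R_{K_{m,n}}[y_j,y_\ell]=\frac{2}{m}$.

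I expect no serious obstacle here. The only subtle point is justifying the symmetry reduction: one appeals to uniqueness of the nodal potentials (modulo a global additive constant), which follows from the one-dimensional kernel of the Laplacian of a connected graph. Any automorphism of the network that preserves the source and sink must then fix the potentials up to a constant, forcing the claimed equalities among $V(y_j)$'s and among the untouched $V(x_\ell)$'s. An alternative spectral route, expanding $R_{K_{m,n}}[u,v]$ using the known Laplacian eigenvalues $\{0,\,n^{(m-1)},\,m^{(n-1)},\,m+n\}$ and the corresponding eigenvectors, would recover the same three formulas, but at the cost of noticeably more bookkeeping than the symmetry argument just outlined; I would therefore prefer the Foster–plus–symmetry route as presented.
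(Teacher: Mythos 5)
Your proposal is correct. Note, however, that the paper offers no proof of Theorem \ref{tm2.1} to compare against: it is imported verbatim from Klein's resistance-distance sum-rules paper, which the authors explicitly say was obtained ``using Forster's [Foster's] first theorem,'' so your Foster-plus-edge-transitivity derivation of $\frac{m+n-1}{mn}$ for adjacent pairs, together with the symmetry/Kirchhoff computation giving $\frac{2}{n}$ and $\frac{2}{m}$ within the two parts (including the correct handling of the source--sink swap via the normalization $V(x_i)+V(x_k)=0$), is sound and consistent with the route attributed to the cited source.
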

$$ R_{K_{m,n}}[u,v]=\left\{
\begin{array}{ll}
 \frac{2}{n}         & \mbox{if} \ u, v \in V(\overline{K_m}),  \\

 \frac{2}{m}         & \mbox{if} \ u, v \in V(\overline{K_n}),  \\

 \frac{m+n-1}{mn}    & \mbox{if} \ u \in V(\overline{K_m}) \ \mbox{and}\  v \in V(\overline{K_n}).

\end{array}
\right.$$

In \cite{svg}, Gervacio defined the network $K^*_{m, n}$ (see Figure \ref{Fig.8}) obtained from  $K_{m, n}$ as a network which consists of  $m + n + 2$ nodes $ \{x_0, x_1, ..., x_m, y_0, y_1,..., y_n\}$, and $m+n+1$ edges $\{[x_0, y_0], [x_0, x_1],\ldots,$ $[x_0, x_m], [y_0, y_1], \ldots, [y_0, y_n]\}$, where the resistances on edges $[x_0, x_i]$ ($1\leq i\leq m$) are $\frac{1}{n}$, the resistances on edges $[y_0, y_j]$ ($1\leq j\leq n$) are $\frac{1}{m}$, and the resistance on edge $[x_0, y_0]$ is $-\frac{1}{mn}$. According to principle of substitution,  Gervacio obtained the following result.
\begin{figure*}[h]
  \setlength{\abovecaptionskip}{0cm} %
  \centering
 $$\includegraphics[width=3.6in]{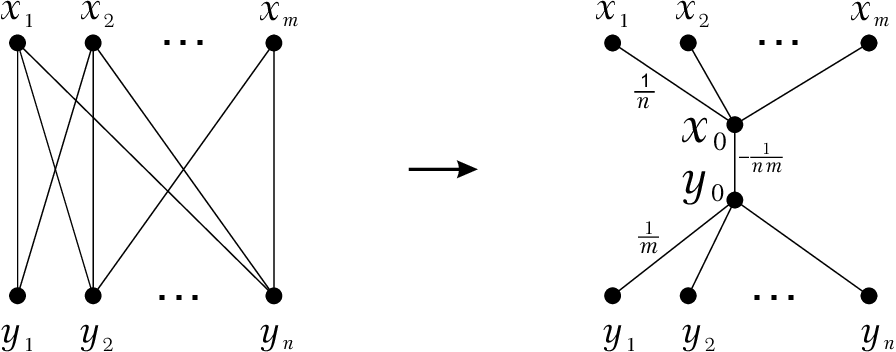}$$\\
 \caption{Networks $K_{m,n}$ and $K^{*}_{m,n}$.}     \label{Fig.8}
\end{figure*}

\begin{thm}\cite{svg}\label{tm2.2}
The network $K_{m,n}$ is $V(K_{m,n})$-equivalent to network $K^*_{m, n}$.
\end{thm}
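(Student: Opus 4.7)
The plan is to verify $V(K_{m,n})$-equivalence by directly computing every resistance distance in $K^{*}_{m,n}$ among the original vertices $x_1,\ldots,x_m,y_1,\ldots,y_n$ and matching them against the three formulas already provided by Theorem \ref{tm2.1}. So this is in spirit a one-direction check rather than an invocation of a deeper principle.

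First I would observe the structural point that $K^{*}_{m,n}$ has $m+n+2$ nodes and exactly $m+n+1$ edges, and is manifestly connected (every $x_i$ reaches $x_0$, every $y_j$ reaches $y_0$, and $x_0,y_0$ are joined). Hence $K^{*}_{m,n}$ is a tree. On a tree, the series principle, iterated along the unique path between two nodes, says that the effective resistance between those two nodes is the algebraic sum of the individual edge resistances along that path. Crucially, this identity is purely a consequence of Ohm's and Kirchhoff's laws for a one-dimensional current path and does not require that the edge resistances be positive; the negative resistor $-\tfrac{1}{mn}$ on $[x_0,y_0]$ enters the sum with its sign, and the overall path resistance remains positive in every case we need.

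Applying this observation to the three relevant pairs gives
\begin{align*}
R_{K^{*}_{m,n}}[x_i,x_j] &= \tfrac{1}{n}+\tfrac{1}{n}=\tfrac{2}{n}, \qquad 1\le i\neq j\le m,\\
R_{K^{*}_{m,n}}[y_i,y_j] &= \tfrac{1}{m}+\tfrac{1}{m}=\tfrac{2}{m}, \qquad 1\le i\neq j\le n,\\
R_{K^{*}_{m,n}}[x_i,y_j] &= \tfrac{1}{n}-\tfrac{1}{mn}+\tfrac{1}{m}=\tfrac{m+n-1}{mn}.
\end{align*}
These are exactly the three values for $R_{K_{m,n}}[\cdot,\cdot]$ given by Theorem \ref{tm2.1}, so $R_{K_{m,n}}[u,v]=R_{K^{*}_{m,n}}[u,v]$ for every $u,v\in V(K_{m,n})$, which is the desired $V(K_{m,n})$-equivalence.

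The one point I would be most careful about, and which I expect to be the main (though mild) obstacle, is making the use of the series principle on an edge with negative resistance fully rigorous, since the principle as stated in the preliminaries is implicitly phrased for ordinary resistors. I would handle this either by returning to Kirchhoff's laws at the level of the linear system for node potentials, observing that the system remains nonsingular because each path resistance we need is positive, or by a limiting argument in which the resistance $-\tfrac{1}{mn}$ is first replaced by a variable $r$ and the formulas above are verified by series composition as identities in $r$ before specialising. Either way, the calculation goes through and yields the claimed equivalence.
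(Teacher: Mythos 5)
Your proposal is correct. Note that the paper itself offers no proof of this statement: it is imported from Gervacio's work with only the remark that it follows ``according to principle of substitution,'' so there is nothing in the paper to match your argument against line by line. Your direct verification is complete and self-contained given Theorem \ref{tm2.1}: since $K^{*}_{m,n}$ has $m+n+2$ nodes and $m+n+1$ edges and is connected, it is a tree, the unique-path current argument forces the effective resistance between any two of the original vertices to be the signed sum of edge resistances along the path, and the three resulting values $\tfrac{2}{n}$, $\tfrac{2}{m}$ and $\tfrac{1}{n}-\tfrac{1}{mn}+\tfrac{1}{m}=\tfrac{m+n-1}{mn}$ coincide exactly with the three cases of Theorem \ref{tm2.1}. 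By the paper's definition of $S$-equivalence (equality of pairwise effective resistances on $S$), this is precisely the assertion of the theorem. Your caution about the negative resistor is the right instinct and is adequately resolved by either of the routes you sketch; the cleanest is the Kirchhoff-law one, since on a tree the current on every edge off the $u$--$v$ path is forced to vanish by the current law applied to the hanging subtrees, and the reduced Laplacian is nonsingular because its determinant equals the product of the (nonzero) edge conductances, so the sign of the single edge weight $-\tfrac{1}{mn}$ is immaterial to the derivation.
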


\begin{lem}\label{lem2.33}
 \textbf{Rayleigh's Monotonicity Law}\cite{jst}. If the edge-resistances in an electrical network are increased, then the effective resistance between any two nodes in the network does not decrease.
\end{lem}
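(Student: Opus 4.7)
The plan is to invoke Thomson's (energy-minimization) principle: for any network $N$ with positive edge-resistances $\{r_e\}_{e\in E(N)}$ and distinguished nodes $a,b$, the effective resistance admits the variational formula
\[
R_N[a,b]=\min_{\theta}\sum_{e\in E(N)} r_e\,\theta(e)^2,
\]
where $\theta$ ranges over all unit flows from $a$ to $b$ (antisymmetric functions on oriented edges satisfying Kirchhoff's node law everywhere except at $a$ and $b$, where the net outflow is $+1$ and $-1$, respectively). I would first sketch this identity as a consequence of Ohm's and Kirchhoff's laws: the quadratic energy functional is strictly convex on the affine subspace of unit flows, its unique minimizer coincides with the physical current $i$ produced by injecting one unit of current at $a$ and withdrawing it at $b$, and multiplying Ohm's law $v(x)-v(y)=r_e\, i(e)$ by $i(e)$ and summing over oriented edges yields that the minimal energy equals $v(a)-v(b)=R_N[a,b]$.

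With this characterization in hand, the monotonicity is essentially immediate. Let $N'$ be the network obtained from $N$ by replacing each $r_e$ with some $r'_e\ge r_e$ (on the same underlying graph), and let $\theta^{*}$ be the minimizing unit flow in $N'$. Then
\[
R_{N'}[a,b]=\sum_{e} r'_e\,\theta^{*}(e)^2 \;\ge\; \sum_{e} r_e\,\theta^{*}(e)^2 \;\ge\; R_N[a,b],
\]
where the first inequality uses $r'_e\ge r_e$ termwise, and the second uses that $\theta^{*}$ remains an admissible unit flow in $N$, so its energy upper-bounds the minimum that defines $R_N[a,b]$. The special case in which only a single edge-resistance is increased (with all others held fixed) is of course covered.

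The only real obstacle is Thomson's principle itself; once that variational identity is available, the desired inequality is a one-line comparison of energies of the same flow under two resistance assignments. Alternative routes would be the dual Dirichlet principle for voltages, or a direct differentiation of the Kirchhoff-matrix formula for effective resistance, which yields $\partial R_N[a,b]/\partial r_e = i(e)^2 \ge 0$ and hence the same conclusion. I prefer the flow-energy route because it matches the electrical-network framework developed in Section~\ref{section2} and requires no spectral machinery.
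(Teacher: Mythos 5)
Your argument is correct, but note that the paper does not prove this statement at all: it is recorded as a classical result cited from the literature (Rayleigh, via \cite{jst}) and is used only as a black box to deduce Proposition \ref{prop2.4}. So there is no ``paper's proof'' to compare against; what you have supplied is the standard Thomson's-principle proof, and it is sound. The variational identity
\[
R_N[a,b]=\min_{\theta}\sum_{e\in E(N)} r_e\,\theta(e)^2
\]
over unit flows $\theta$ from $a$ to $b$ is exactly what is needed: once it is in hand, evaluating the two energy functionals at the optimal flow $\theta^{*}$ for the larger resistances gives the termwise comparison and then the admissibility bound, precisely as you wrote. Your sketch of Thomson's principle itself (strict convexity of the energy on the affine space of unit flows, identification of the minimizer with the physical current via Kirchhoff's laws, and the computation showing the minimal energy equals $v(a)-v(b)$) is the usual route; if one wanted to make it fully rigorous the only step to expand is the orthogonality argument showing that for any unit flow $\theta$ the cross term $\sum_e r_e\, i(e)\bigl(\theta(e)-i(e)\bigr)$ vanishes because $\theta-i$ is a circulation and $r_e\, i(e)$ is a potential difference. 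The one case your finite comparison does not literally cover is increasing a resistance to $+\infty$ (edge deletion), which the paper does invoke right after the lemma; that follows from your inequality by a monotone limit, and is worth a sentence if you intend the proof to support Proposition \ref{prop2.4} as well.
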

For a network $N$, if an edge (resp. node) in $N$ is deleted, it is equivalent to increase the resistance on this edge (resp. all edges that incident with this node) to $+ \infty$. Therefore, according to Rayleigh's monotonicity law, the following proposition can be readily known.

\begin{prop}\label{prop2.4}
Let $M$ be a subnetwork of $N$. Then for any $u,v \in V(M)$
\begin{center}$R_{N}[u, v] \leq R_{M}[u, v]$. \end{center}
\end{prop}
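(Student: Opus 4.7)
The plan is to derive the inequality as a direct corollary of Rayleigh's Monotonicity Law (Lemma \ref{lem2.33}), essentially formalizing the informal remark made in the paragraph immediately preceding the proposition. The underlying idea is that $N$ can be viewed as $M$ together with some extra resistors, and adding a resistor to a network (i.e.\ providing an extra current path) can only decrease effective resistance.

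To carry this out rigorously, I would first introduce an intermediate network $N_K$ on the same vertex set $V(N)$ and edge set $E(N)$ as $N$, in which every edge $e\in E(M)$ keeps the resistance it carries in $M$ (equivalently, in $N$), while every edge $e\in E(N)\setminus E(M)$ is assigned a large resistance $K>0$. Comparing $N_K$ with $N$ edge by edge, the edges in $E(M)$ are unchanged and the edges in $E(N)\setminus E(M)$ have had their resistances increased from finite values in $N$ to $K$ (for $K$ sufficiently large). Rayleigh's Monotonicity Law then yields
\begin{equation*}
R_{N}[u,v]\;\leq\;R_{N_K}[u,v]\qquad\text{for all }u,v\in V(M).
\end{equation*}

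Next, I would let $K\to\infty$. In this limit the edges in $E(N)\setminus E(M)$ effectively carry no current and may be removed, so the resulting network consists of $M$ together with the remnants of the vertex set $V(N)\setminus V(M)$, which appear either as isolated vertices or as pendant subnetworks attached to $M$ only through already-removed edges. By the Principle of Elimination these extraneous pieces can be discarded without affecting $R[u,v]$ for $u,v\in V(M)$, and hence the limiting network is $V(M)$-equivalent to $M$. Therefore $\lim_{K\to\infty}R_{N_K}[u,v]=R_{M}[u,v]$, and combining this with the Rayleigh inequality above gives $R_{N}[u,v]\leq R_{M}[u,v]$.

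The only mild obstacle is the passage to the limit $K\to\infty$, since Rayleigh's law is stated for finite edge-resistances, together with the bookkeeping for vertices in $V(N)\setminus V(M)$. Both points are entirely routine, and beyond them the argument is a purely mechanical application of Rayleigh's law requiring no further ingredients from the paper.
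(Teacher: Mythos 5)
Your argument is correct and is essentially the paper's own justification: the authors prove Proposition \ref{prop2.4} by the informal remark immediately preceding it, namely that deleting edges or vertices amounts to raising the corresponding edge-resistances to $+\infty$ and then invoking Rayleigh's Monotonicity Law. Your proposal simply makes that limiting argument explicit (via the auxiliary network $N_K$ and the Principle of Elimination for the leftover isolated vertices), so it matches the paper's route.
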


In the rest of this section, we give some formulas for calculating resistance distances through algebraic methods. First, the \textit{adjacency matrix} $A_G$ of $G$ with $n$ vertices is an $n \times n$ matrix such that  the $(i, j)$-th element of $A_G$ is equal to 1 if vertices $i$ and $j$ are adjacent and 0 otherwise. The \textit{Laplacian matrix} of $G$ is $L_G=D_G-A_G$, where $D_G$ is the diagonal matrix of vertex degrees of $G$. Obviously, if $G$ is a connected graph, then $L_{G}$ has a single $0$-eigenvalue, and all other eigenvalues are positive.

Given two graphs $G$ and $H$ with $n$ vertices and $m$ vertices, respectively. In the following, we suppose that $\lambda_{1}, \lambda_{2},...,\lambda_{n}=0$ and $\mu_{1}, \mu_{2},...,\mu_{m}=0$ are eigenvalues of $L_{G}$ and $L_{H}$ arranged in nonincreasing order. Suppose that $\Psi_{1}, \Psi_{2},...,\Psi_{n}$ and $\Phi_{1}, \Phi_{2},...,\Phi_{m}$ denote orthogonal eigenvectors of $L_{G}$ and $L_{H}$ such that for any $i$ and $j$, $\Psi_{i}=(\Psi_{i1},\Psi_{i2},...,\Psi_{in})^T$ and  $\Phi_{j}=(\Phi_{j1},\Phi_{j2},...,\Phi_{jm})^T$ are eigenvectors affording $\lambda_{i}$ and $\mu_{j}$, respectively. It is understood that  $\Psi_{n} = \frac{1}{\sqrt{n}}\textbf{e}_{n}$ and $\Phi_{m} = \frac{1}{\sqrt{m}}\textbf{e}_{m}$, where $\textbf{e}_{k}$ denotes the $k$-vector of all 1 s.

For two vectors $\Psi$  and $\Phi$, we use $\Psi \bm{\otimes} \Phi $ to denote their Kronecker product.  Then Laplacian eigenvalues and eigenvectors of $G \square H$ are given in the following lemma.

\begin{lem}\cite{rme}\label{lem2.6}
Let two graphs $G$ and $H$ be defined as above. Then the eigenvalues of $G\square H$ are $\{\lambda_{i}+\mu_{j}: 1\leq i\leq n, 1\leq j \leq m\}$. Moreover, the eigenvector of $G\square H$ affording $\lambda_{i}+\mu_{j}$ is $\Psi_{i} \bm{\otimes} \Phi_{j}$.
\end{lem}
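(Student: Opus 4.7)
The plan is to exploit the well-known block structure of the adjacency/Laplacian matrix of a Cartesian product and then verify the eigen-equation directly via Kronecker product algebra. First I would label the vertices of $G \square H$ in lexicographic order $(u_1,v_1),(u_1,v_2),\ldots,(u_1,v_m),(u_2,v_1),\ldots,(u_n,v_m)$ and observe that under this ordering the Laplacian admits the clean decomposition
\begin{equation*}
L_{G \square H} \;=\; L_{G}\bm{\otimes} I_{m} \;+\; I_{n}\bm{\otimes} L_{H},
\end{equation*}
which follows because in $G\square H$ a vertex $(u_i,v_k)$ has degree $\deg_G(u_i)+\deg_H(v_k)$, and two distinct vertices are adjacent iff exactly one of their coordinates agrees and the other coordinates are adjacent in the respective factor.

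Next I would apply the standard mixed-product identity $(A\bm{\otimes} B)(C\bm{\otimes} D) = (AC)\bm{\otimes}(BD)$ to the candidate eigenvector $\Psi_i \bm{\otimes} \Phi_j$. Using $L_G \Psi_i = \lambda_i \Psi_i$ and $L_H \Phi_j = \mu_j \Phi_j$, this gives
\begin{equation*}
L_{G \square H}(\Psi_i \bm{\otimes} \Phi_j) = (L_G\Psi_i)\bm{\otimes}\Phi_j + \Psi_i \bm{\otimes}(L_H \Phi_j) = (\lambda_i+\mu_j)(\Psi_i \bm{\otimes} \Phi_j),
\end{equation*}
so each $\Psi_i \bm{\otimes} \Phi_j$ is an eigenvector with eigenvalue $\lambda_i + \mu_j$.

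To conclude that this accounts for the entire spectrum, I would use orthogonality: since $\{\Psi_i\}$ and $\{\Phi_j\}$ are orthonormal (after normalization), the family $\{\Psi_i \bm{\otimes} \Phi_j : 1 \le i \le n,\ 1 \le j \le m\}$ is orthogonal in $\mathbb{R}^{nm}$ because $\langle \Psi_i \bm{\otimes} \Phi_j,\ \Psi_{i'} \bm{\otimes}\Phi_{j'}\rangle = \langle \Psi_i,\Psi_{i'}\rangle\langle \Phi_j,\Phi_{j'}\rangle$. Hence these $nm$ vectors are linearly independent, and since $L_{G \square H}$ is an $nm \times nm$ symmetric matrix, they form a complete eigenbasis, so the multiset $\{\lambda_i + \mu_j\}$ exhausts the spectrum.

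The only mildly subtle step is the initial identification $L_{G \square H} = L_G \bm{\otimes} I_m + I_n \bm{\otimes} L_H$; once this Kronecker-sum form is in hand, the rest is a routine manipulation with the mixed-product identity, so I do not anticipate a real obstacle. If desired, one can instead give a slightly slicker proof by noting that $L_G \bm{\otimes} I_m$ and $I_n \bm{\otimes} L_H$ commute and are simultaneously diagonalizable in the tensor basis $\{\Psi_i \bm{\otimes} \Phi_j\}$, which reaches the same conclusion.
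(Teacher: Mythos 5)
Your proof is correct and complete. The paper does not prove this lemma itself---it is quoted from Merris's work on Laplacian graph eigenvectors---so there is no in-paper argument to compare against; your derivation (the Kronecker-sum decomposition $L_{G\square H}=L_{G}\bm{\otimes} I_{m}+I_{n}\bm{\otimes} L_{H}$ under lexicographic vertex ordering, the mixed-product identity applied to $\Psi_{i}\bm{\otimes}\Phi_{j}$, and the orthogonality/dimension count showing the $nm$ tensor products exhaust the spectrum) is precisely the standard proof of this fact.
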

In \cite{yya1}, Yang and Klein further verified the eigenvectors of $G\square H$ given above are orthogonal (see Theorem 2.5), they also characterized the explicit formula for resistance distances in graph $G\square H$, which plays an essential tool in the proof, as given in the following theorem.

\begin{thm}\cite{yya1}\label{thm3.1} Let two graphs $G$ and $H$ be defined as above. For $u, v\in V(G)$ and $x,y \in V(H)$, we have
\begin{equation*}
R_{G\square H}[(u,x),(v,y)]=\frac{1}{m}R_{G}[u,v] +  \frac{1}{n}R_{H}[x,y] + \sum_{p=1}^{n-1}\sum_{q=1}^{m-1}\frac{\left(\Psi_{pu}\Phi_{qx}-\Psi_{pv}\Phi_{qy}\right)^2}{\lambda_{p}+\mu_{q}}.
\end{equation*}
\end{thm}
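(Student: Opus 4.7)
The plan is to derive the formula from the standard spectral representation of resistance distance,
$$R_G[u,v] \;=\; \sum_{i=1}^{n-1} \frac{(\Psi_{iu} - \Psi_{iv})^{2}}{\lambda_i},$$
which holds for any connected graph $G$ equipped with an orthonormal basis of Laplacian eigenvectors $\Psi_1,\ldots,\Psi_n$ corresponding to eigenvalues $\lambda_1 \ge \cdots \ge \lambda_{n-1} > \lambda_n = 0$ (with $\Psi_n = \tfrac{1}{\sqrt{n}}\textbf{e}_n$), and which is a classical consequence of the spectral decomposition of the Moore--Penrose pseudoinverse of $L_G$. The same identity specialised to $H$ will reappear at the end of the computation.

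Applying this identity to the product network $G\square H$ is the first step. By Lemma~\ref{lem2.6}, combined with the factorisation $(\Psi_i\bm{\otimes}\Phi_j)^{T}(\Psi_{i'}\bm{\otimes}\Phi_{j'}) = (\Psi_i^{T}\Psi_{i'})(\Phi_j^{T}\Phi_{j'})$, the family $\{\Psi_i\bm{\otimes}\Phi_j\}_{i,j}$ forms an orthonormal eigenbasis of $L_{G\square H}$ with eigenvalues $\lambda_i+\mu_j$, whose unique zero eigenmode is indexed by $(i,j) = (n,m)$. Using the entrywise relation $(\Psi_i\bm{\otimes}\Phi_j)_{(u,x)} = \Psi_{iu}\Phi_{jx}$, the spectral identity on $G\square H$ becomes
$$R_{G\square H}[(u,x),(v,y)] \;=\; \sum_{\substack{1\le i\le n,\ 1\le j\le m \\ (i,j)\ne(n,m)}} \frac{(\Psi_{iu}\Phi_{jx}-\Psi_{iv}\Phi_{jy})^{2}}{\lambda_i+\mu_j}.$$

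The decisive step is to split this sum into three blocks according to which of $i=n$, $j=m$ holds: the marginal block $i=n$, $j<m$; the marginal block $j=m$, $i<n$; and the interior block $i<n$, $j<m$. In the first block $\Psi_{nu}=\Psi_{nv}=1/\sqrt{n}$ factors out, collapsing the numerator to $n^{-1}(\Phi_{jx}-\Phi_{jy})^{2}$ over denominator $\mu_j$; summing over $j$ and re-invoking the spectral identity for $H$ yields $\tfrac{1}{n}R_H[x,y]$. By symmetry the second block contributes $\tfrac{1}{m}R_G[u,v]$, and the third block is already in the form of the double sum appearing in the statement.

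Once the eigenbasis is set up, the argument is essentially bookkeeping and no single step is a real obstacle. The only subtle point is verifying the orthonormality of the Kronecker products $\Psi_i\bm{\otimes}\Phi_j$ (so that the spectral representation can legitimately be applied on $G\square H$) and recognising that the identities $\Psi_{nu}=\Psi_{nv}$ and $\Phi_{mx}=\Phi_{my}$ are exactly what collapse the two marginal blocks into the single-index resistance terms $\tfrac{1}{n}R_H[x,y]$ and $\tfrac{1}{m}R_G[u,v]$; beyond this, the theorem is a direct consequence of Lemma~\ref{lem2.6}.
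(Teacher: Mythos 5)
Your argument is correct and complete: the spectral representation $R_G[u,v]=\sum_{i=1}^{n-1}(\Psi_{iu}-\Psi_{iv})^2/\lambda_i$ applied to the orthonormal Kronecker eigenbasis of Lemma \ref{lem2.6}, followed by the split into the two marginal blocks (where $\Psi_{nu}=\Psi_{nv}=1/\sqrt{n}$ and $\Phi_{mx}=\Phi_{my}=1/\sqrt{m}$ collapse the numerators) and the interior block, is exactly the standard derivation. The paper itself states Theorem \ref{thm3.1} without proof, citing \cite{yya1}, and your route --- including the orthogonality verification it explicitly attributes to that reference --- coincides with the argument given there.
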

The join $G+H$ \cite{aaz1} is a graph such that $V(G+H)=V(G)\cup V(H)$ and $E(G+H)=E(G)\cup E(H)\cup \{[u,v]|u\in V(G), v\in V(H)\}$.
\begin{lem}\cite{yya1}\label{tm1.2}
Let two graphs $G$ and $H$ be defined as above. Then for $u, v\in V(G)$ and $w \in V(H)$, we have
\begin{equation*}
R_{G+H}[u,v]=\sum_{k=1}^{n-1}\frac{(\Psi_{ku}-\Psi_{kv})^2}{\lambda_{k}+m};
\end{equation*}
\begin{equation*}
R_{G+H}[u,w]=\sum_{k=1}^{n-1}\frac{\Psi_{ku}^2}{\lambda_{k}+m}+ \sum_{k=1}^{m-1}\frac{\Phi_{kw}^2}{\mu_{k}+n}+\frac{1}{nm}.
\end{equation*}
\end{lem}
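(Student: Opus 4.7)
The plan is to apply the standard Laplacian spectral formula
$$R_N[x,y]=\sum_{k:\,\lambda_k>0}\frac{(v_{kx}-v_{ky})^2}{\lambda_k},$$
valid for any orthonormal eigenbasis $\{v_k\}$ of $L_N$ with eigenvalues $\lambda_k$, to the join $N=G+H$. Listing the vertices of $V(G)$ first, the Laplacian of $G+H$ admits the block form
$$L_{G+H}=\begin{pmatrix} L_G+mI_n & -J_{n,m}\\ -J_{m,n} & L_H+nI_m\end{pmatrix},$$
where $J_{a,b}$ is the $a\times b$ all-ones matrix. Hence the whole proof reduces to producing an explicit orthonormal eigenbasis of this block matrix out of the bases $\{\Psi_i\}$ and $\{\Phi_j\}$ already fixed for $L_G$ and $L_H$.

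The eigenbasis I would use consists of four families. First, since $\Psi_k\perp \mathbf{e}_n$ for $k<n$ one has $J_{m,n}\Psi_k=\mathbf 0$, so the extended vector $(\Psi_k,\mathbf 0)^T$ is an eigenvector of $L_{G+H}$ with eigenvalue $\lambda_k+m$; by symmetry, $(\mathbf 0,\Phi_k)^T$ affords $\mu_k+n$ for $k<m$. The all-ones vector $\tfrac{1}{\sqrt{n+m}}(\mathbf e_n,\mathbf e_m)^T$ is a kernel vector. The remaining direction, forced by orthogonality to those just listed, is
$$\xi=\tfrac{1}{\sqrt{nm(n+m)}}\bigl(m\mathbf e_n,\,-n\mathbf e_m\bigr)^T,$$
and a direct block multiplication yields $L_{G+H}\xi=(n+m)\xi$ together with $\|\xi\|=1$. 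The total count $(n-1)+(m-1)+1+1=n+m$ confirms that we have an orthonormal eigenbasis.

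Substituting into the spectral formula then finishes both identities. For $u,v\in V(G)$, the vectors $(\mathbf 0,\Phi_k)^T$ vanish on $V(G)$ and $\xi$ is constant on $V(G)$, so only the first family contributes, yielding the first stated sum. For $u\in V(G)$, $w\in V(H)$, the first two families yield the two sums, while
$$\frac{1}{n+m}\left(\tfrac{m}{\sqrt{nm(n+m)}}-\tfrac{-n}{\sqrt{nm(n+m)}}\right)^2=\frac{1}{nm}$$
is the contribution of $\xi$, producing the additive constant. The only nontrivial obstacle is spotting and correctly normalizing the mixed eigenvector $\xi$ and verifying its eigenvalue $n+m$; everything else is bookkeeping that rests on the assumed orthonormality of $\{\Psi_i\}$ and $\{\Phi_j\}$.
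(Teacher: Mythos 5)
Your proof is correct. Note that the paper itself gives no proof of this lemma --- it is quoted directly from Yang and Klein \cite{yya1} --- and your derivation (block decomposition of $L_{G+H}$, the four families of eigenvectors including the mixed vector $\xi$ with eigenvalue $n+m$, and substitution into the spectral formula $R[x,y]=\sum_{\lambda_k>0}(v_{kx}-v_{ky})^2/\lambda_k$) is exactly the standard argument used there, with all normalizations and the $\frac{1}{nm}$ contribution of $\xi$ checked correctly.
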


For a clique $K_{2}=c_{1}c_{2}$,  the $n$-th ladder graph $L_n$ is the Cartesian product of $P_n$ and $K_2$, that is $L_n=P_{n}\square K_2$.  For example, the ladder graph $L_4$ is given in Figure \ref{Fig.7}. Z. Cinkir \cite{zci} and Shi et al. \cite{lsh} independently obtained formulae for resistance distance between any two vertices in $L_n$. Z. Cinkir \cite{zci} also determined the Kirchhoff indices (the sum of resistance distance between all pairs of vertices).

\begin{figure*}[h]
  \setlength{\abovecaptionskip}{-0.1cm} %
  \centering
 $$\includegraphics[width=3.2in]{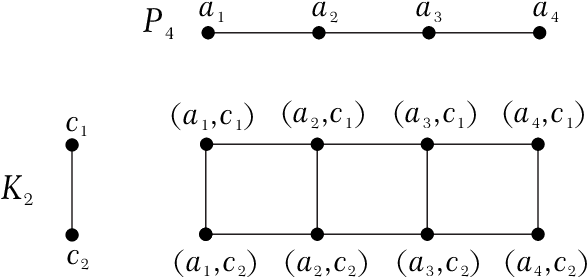}$$\\
 \caption{The ladder graph $L_4=P_{4}\square K_2$ and its vertex labelling.}     \label{Fig.7}
\end{figure*}

 \begin{lem}\cite{zci} \label{lem3.2}
 Let $L_n=P_{n}\square K_2$ $(n\geq 2)$. Then
$$R_{L_{n}}[(a_1,c_1),(a_{n},c_2)]=\frac{n-1}{2}+\frac{(1+\alpha^{n-1})}{4\sqrt{3}(1-\alpha^{2n})}\left(2+2\alpha^{n+1}+2\alpha^{n}+2\alpha\right),$$
where $\alpha=2-\sqrt{3}.$
\end{lem}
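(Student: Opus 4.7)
The plan is to apply Theorem \ref{thm3.1} directly with $G=P_n$ and $H=K_2$. For $K_2$ the only nontrivial Laplacian eigenpair is $(\mu_1,\Phi_1)=(2,\tfrac{1}{\sqrt{2}}(1,-1)^T)$, while for $P_n$ the Laplacian eigenvalues are $\lambda_p=2-2\cos(p\pi/n)$ with orthonormal eigenvectors $\Psi_{p,i}=\sqrt{2/n}\cos\!\bigl(\tfrac{(2i-1)p\pi}{2n}\bigr)$ for $p\geq 1$. Substituting $R_{P_n}[a_1,a_n]=n-1$ and $R_{K_2}[c_1,c_2]=1$ into Theorem \ref{thm3.1} yields
\[
R_{L_n}[(a_1,c_1),(a_n,c_2)]=\frac{n-1}{2}+\frac{1}{n}+\sum_{p=1}^{n-1}\frac{(\Psi_{p,1}+\Psi_{p,n})^2}{2(\lambda_p+2)}.
\]
The parity identity $\cos(p\pi-p\pi/(2n))=(-1)^p\cos(p\pi/(2n))$ gives $\Psi_{p,n}=(-1)^p\Psi_{p,1}$, so odd $p$ contribute nothing and the sum collapses to one over the even indices $p=2q$.

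The main obstacle is the closed-form evaluation of the remaining trigonometric sum
\[
S_n=\frac{4}{n}\sum_{q=1}^{\lfloor(n-1)/2\rfloor}\frac{\cos^2(q\pi/n)}{4-2\cos(2q\pi/n)}.
\]
The guiding observation is that $\alpha=2-\sqrt{3}$ and $\alpha^{-1}=2+\sqrt{3}$ are exactly the roots of $x^2-4x+1=0$, which is why this specific $\alpha$ appears in the answer: the denominator factors as $4-2\cos\theta=\alpha^{-1}(1-\alpha e^{i\theta})(1-\alpha e^{-i\theta})$. I would expand $\cos^2(q\pi/n)=\tfrac{1}{2}(1+\cos(2q\pi/n))$, split the summand into partial fractions in $e^{\pm 2\pi i q/n}$, and then symmetrize the index $q$ to run over all $n$-th roots of unity. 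Applying the roots-of-unity filter $\sum_{q=0}^{n-1}e^{2\pi i qk/n}=n\cdot\mathbf{1}[n\mid k]$ to the resulting geometric partial sums produces the factor $1/(1-\alpha^{2n})$ in the denominator, and regrouping the remaining powers of $\alpha$ reconstructs the numerator $(1+\alpha^{n-1})(2+2\alpha^{n+1}+2\alpha^n+2\alpha)$, with the stray $\tfrac{1}{n}$ from Theorem \ref{thm3.1} being absorbed into the final simplification.

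A cleaner alternative avoids the trigonometric sum altogether. Introduce $f_n=R_{L_n}[(a_1,c_1),(a_n,c_2)]$ and the companion $g_n=R_{L_n}[(a_1,c_1),(a_n,c_1)]$; by symmetry these determine the end-to-end two-port behaviour of $L_n$. Appending one more rung (two unit rail edges plus one unit rung) to the right end and applying series--parallel reduction with the principle of substitution yields a recurrence whose characteristic polynomial is $x^2-4x+1$, with roots $\alpha$ and $\alpha^{-1}$. Solving with base cases $f_2=1,\;g_2=\tfrac{3}{4}$ (both immediate from $L_2=C_4$) reproduces the closed form in the statement.
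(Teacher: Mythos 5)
The paper never proves this lemma: it is imported verbatim from Cinkir \cite{zci}, so there is no internal argument to compare against and your attempt has to stand on its own. Your first route is set up correctly. Applying Theorem \ref{thm3.1} with $H=K_2$ does give $R_{L_n}[(a_1,c_1),(a_n,c_2)]=\tfrac{n-1}{2}+\tfrac{1}{n}+\tfrac{1}{2}\sum_{p=1}^{n-1}\left(\Psi_{pa_1}+\Psi_{pa_n}\right)^2/(\lambda_p+2)$, which is exactly the paper's own Eq.\ (\ref{eq4.3}); the parity identity $\Psi_{pa_n}=(-1)^p\Psi_{pa_1}$ and the factorization $4-2\cos\theta=\alpha^{-1}(1-\alpha e^{i\theta})(1-\alpha e^{-i\theta})$ are both right, and the reduced identity you would need, namely $\tfrac{1}{n}+S_n=\tfrac{(1+\alpha)(1+\alpha^{n-1})}{2\sqrt{3}(1-\alpha^{n})}$ with $S_n$ your trigonometric sum, is equivalent to the stated formula and checks out at $n=2,3$. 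The problem is that the decisive step --- pushing the roots-of-unity filter through the partial fractions and regrouping the powers of $\alpha$ into the stated numerator --- is only asserted (``reconstructs the numerator''), not performed, and that computation \emph{is} the lemma. You also skip the bookkeeping needed to symmetrize the index: the sum runs over $1\le q\le\lfloor (n-1)/2\rfloor$, so extending to $q=0,\dots,n-1$ requires subtracting the $q=0$ term (which equals $\tfrac12$ and is precisely what absorbs the stray $\tfrac1n$) and noting that the $q=n/2$ term vanishes for even $n$. As written this is a credible plan, not a proof.

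Your ``cleaner alternative'' has a concrete flaw: $f_n$ and $g_n$ alone do not determine the behaviour of $L_n$ as seen from the three terminals $(a_1,c_1)$, $(a_n,c_1)$, $(a_n,c_2)$. The $Y$-equivalent at those terminals requires three independent resistances, so you must also carry $h_n=R_{L_n}[(a_n,c_1),(a_n,c_2)]$; the flip symmetry $c_1\leftrightarrow c_2$ does not fix the source vertex $(a_1,c_1)$ and therefore gives no relation collapsing three unknowns to two. Once $h_n$ is added the rung-appending recurrence does close up (this is essentially how Cinkir and Shi--Chen obtain these formulas), but you would still have to solve the resulting linear system explicitly rather than gesture at its characteristic polynomial.
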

By using Lemma \ref{lem3.2}, together with the aid of Mathematica \cite{wol}, the following properties of resistance distances in $L_{n}$ can be derived straightforwardly.
 \begin{lem} \label{lem3.3}
Let $L_n=P_{n}\square K_2$ $(n\geq 2)$. Then
$$R_{L_{n+1}}[(a_1,c_1),(a_{n+1},c_2)]-R_{L_{n}}[(a_1,c_1),(a_{n},c_2)]> \frac{1}{4}.$$
\end{lem}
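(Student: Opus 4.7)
My plan is to treat this as a direct algebraic consequence of the closed form in Lemma~\ref{lem3.2}. Writing $f(n) := R_{L_{n}}[(a_1,c_1),(a_{n},c_2)]$, I would first simplify that formula by recognizing the factorizations $2+2\alpha^{n+1}+2\alpha^{n}+2\alpha = 2(1+\alpha)(1+\alpha^n)$ and $1-\alpha^{2n} = (1-\alpha^n)(1+\alpha^n)$, which allows the common factor $(1+\alpha^n)$ to cancel. Using the identity $(1+\alpha)/(2\sqrt{3}) = (\sqrt{3}-1)/2$ for $\alpha=2-\sqrt{3}$, this yields the compact expression
\[
f(n) = \frac{n-1}{2} + \frac{(\sqrt{3}-1)(1+\alpha^{n-1})}{2(1-\alpha^n)}.
\]

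Next, I would compute $f(n+1)-f(n)$ by combining the two resulting rational terms over the common denominator $(1-\alpha^n)(1-\alpha^{n+1})$. The linear piece of $f$ contributes $1/2$, and the numerator $(1+\alpha^n)(1-\alpha^n)-(1+\alpha^{n-1})(1-\alpha^{n+1})$ collapses to $\alpha^{n-1}(\alpha^2-1)$. After pulling out the prefactor $(\sqrt{3}-1)/2$ and using $(\sqrt{3}-1)(1-\alpha^2)/2 = 9-5\sqrt{3}$, one arrives at
\[
f(n+1)-f(n) = \frac{1}{2} \;-\; \frac{(9-5\sqrt{3})\,\alpha^{n-1}}{(1-\alpha^n)(1-\alpha^{n+1})}.
\]

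To finish, it suffices to show that the correction term is strictly less than $1/4$ for every $n\geq 2$. A direct evaluation at $n=2$ produces the sharp value $1/10$ (after rationalizing, the numerator $(9-5\sqrt{3})(2-\sqrt{3}) = 33-19\sqrt{3}$ and the denominator $(4\sqrt{3}-6)(15\sqrt{3}-25) = 10(33-19\sqrt{3})$ share a common factor). Moreover, the ratio of consecutive correction terms equals $\alpha(1-\alpha^n)/(1-\alpha^{n+2})$, which is strictly less than $1$ because $0<\alpha<1$; hence the correction is strictly decreasing in $n$, bounded above by $1/10$, and therefore $f(n+1)-f(n) \geq 1/2 - 1/10 = 2/5 > 1/4$, as desired.

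The only step requiring genuine care is the algebraic simplification producing the compact form of $f(n+1)-f(n)$; once this is in hand, the monotonicity of the correction term and the final inequality are immediate, which is presumably why the authors appeal to Mathematica for the derivation.
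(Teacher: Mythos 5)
Your proposal is correct and follows essentially the same route as the paper, which simply asserts that the inequality "can be derived straightforwardly" from the closed form in Lemma~\ref{lem3.2} with the aid of Mathematica; your factorization of $f(n)$, the telescoped difference $f(n+1)-f(n)=\tfrac12-\tfrac{(9-5\sqrt{3})\alpha^{n-1}}{(1-\alpha^{n})(1-\alpha^{n+1})}$, the exact value $1/10$ at $n=2$, and the monotonicity of the correction term all check out. The only difference is that you supply by hand the symbolic verification the paper delegates to a computer algebra system, which makes your version the more self-contained of the two.
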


\begin{lem}\cite{lsh} \label{lem3.4}
Let $L_n=P_{n}\square K_2$ $(n\geq 2)$. Then
$$R_{L_{n}}[(a_1,c_1),(a_{n},c_2)]-R_{L_{n}}[(a_1,c_1),(a_{n},c_1)]=\frac{2\sqrt{3}}{a^{n}-b^{n}}> 0,$$
where $a=2+\sqrt{3}$, $b=2-\sqrt{3}$.
\end{lem}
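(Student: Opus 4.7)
My plan is to apply Theorem \ref{thm3.1} to $L_n = P_n \square K_2$ and take the difference of the two resistances in question. Let $R_n := R_{L_n}[(a_1,c_1),(a_n,c_1)]$ and $S_n := R_{L_n}[(a_1,c_1),(a_n,c_2)]$. The Laplacian of $K_2$ has eigenpairs $(\mu_1,\Phi_1) = (2,\tfrac{1}{\sqrt{2}}(1,-1)^{\top})$ and $(\mu_2,\Phi_2) = (0,\tfrac{1}{\sqrt{2}}(1,1)^{\top})$, so upon subtracting the formulas from Theorem \ref{thm3.1} the $R_{P_n}[a_1,a_n]$ terms and the $q=2$ summands cancel. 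Using $(\Psi_{p,1}\Phi_{1,1}-\Psi_{p,n}\Phi_{1,2})^{2}-(\Psi_{p,1}\Phi_{1,1}-\Psi_{p,n}\Phi_{1,1})^{2} = 2\Psi_{p,1}\Psi_{p,n}$, the difference reduces to
\begin{equation*}
S_n - R_n \;=\; \frac{1}{n} \;+\; \sum_{p=1}^{n-1}\frac{2\,\Psi_{p,1}\Psi_{p,n}}{\lambda_p+2}.
\end{equation*}

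Next I substitute the explicit Laplacian eigenvectors of $P_n$, namely $\Psi_{p,j}=\sqrt{2/n}\cos\!\bigl((2j{-}1)p\pi/(2n)\bigr)$ with $\lambda_p = 4\sin^{2}(p\pi/(2n))$. Since $\cos\!\bigl((2n{-}1)p\pi/(2n)\bigr) = (-1)^{p}\cos(p\pi/(2n))$, this gives $\Psi_{p,1}\Psi_{p,n} = \tfrac{2}{n}(-1)^{p}\cos^{2}(p\pi/(2n))$. Applying $\cos^{2}(p\pi/(2n)) = \tfrac{1+\cos(p\pi/n)}{2}$, $\lambda_p+2 = 2(2-\cos(p\pi/n))$, and the elementary identity $\tfrac{1+x}{2-x} = -1 + \tfrac{3}{2-x}$, the formula collapses to
\begin{equation*}
S_n - R_n \;=\; \frac{1}{n}\Bigl[\,1 \;-\; \sum_{p=1}^{n-1}(-1)^{p} \;+\; 3\,T_n\Bigr], \qquad T_n \;:=\; \sum_{p=1}^{n-1}\frac{(-1)^{p}}{2-\cos(p\pi/n)}.
\end{equation*}

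The heart of the proof is evaluating $T_n$ in closed form. Note that $a = 2+\sqrt{3}$ and $b = 2-\sqrt{3}$ are the roots of $z^2 - 4z + 1 = 0$ and satisfy $ab = 1$, $a-b = 2\sqrt{3}$. The partial-fraction decomposition
\begin{equation*}
\frac{1}{2-\cos\theta} \;=\; \frac{2}{a-b}\Bigl(\frac{b}{e^{i\theta}-b} - \frac{a}{e^{i\theta}-a}\Bigr)
\end{equation*}
reduces $T_n$ to sums over the $2n$-th roots of unity $w_p = e^{ip\pi/n}$; I first extend the range to $p = 0, 1, \ldots, 2n-1$ using the symmetry $p \leftrightarrow 2n-p$ of the summand, and then, observing that $(-1)^p = w_p^n$, I invoke the classical identity
\begin{equation*}
\sum_{p=0}^{2n-1}\frac{w_p^{n}}{z - w_p} \;=\; \frac{2n\,z^{n-1}}{z^{2n} - 1},
\end{equation*}
specialized to $z = a$ and $z = b$. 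The relation $ab = 1$ lets me rewrite $\tfrac{a^n}{a^{2n}-1} = \tfrac{1}{a^n - b^n}$ and similarly $\tfrac{b^n}{b^{2n}-1} = -\tfrac{1}{a^n - b^n}$, so the two contributions combine into a single closed form for $T_n$.

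The main obstacle is the bookkeeping of parity: the elementary sum $\sum_{p=1}^{n-1}(-1)^p$ equals $0$ for odd $n$ and $-1$ for even $n$, and the extra boundary terms $p=0$ and $p=n$ introduced by extending the roots-of-unity sum contribute $1$ and $(-1)^n/3$ respectively. The cancellation miracle is that these parity-dependent pieces combine perfectly so that $S_n - R_n$ has the single uniform expression $\tfrac{2\sqrt{3}}{a^n - b^n}$, which is positive since $a > b > 0$. As a sanity check, for $n = 2$ (where $L_2 = C_4$) the formula gives $\tfrac{2\sqrt{3}}{(a+b)(a-b)} = \tfrac{2\sqrt{3}}{4\cdot 2\sqrt{3}} = \tfrac{1}{4}$, matching the direct computation $R_{C_4}[\text{opposite}] - R_{C_4}[\text{adjacent}] = 1 - \tfrac{3}{4}$.
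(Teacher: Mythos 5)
Your derivation is correct, but it is worth noting that the paper offers no proof of Lemma \ref{lem3.4} at all: the identity is quoted verbatim from Shi and Chen \cite{lsh}, who obtain closed-form resistance distances in ladder graphs (linear polyomino chains) by network-reduction/recursion techniques rather than spectrally. Your argument is therefore a genuinely independent, self-contained proof. Its first half --- applying Theorem \ref{thm3.1} to $P_n\square K_2$ and reducing the difference to $\frac1n+2\sum_{p=1}^{n-1}\frac{\Psi_{pa_1}\Psi_{pa_n}}{\lambda_p+2}$ --- is exactly the computation the authors themselves perform later in Eq.~(\ref{eq4.13}) inside the proof of Theorem \ref{thm4.3}; what you add is the closed-form evaluation of that sum via the explicit path eigendata $\Psi_{p,j}=\sqrt{2/n}\cos\bigl((2j-1)p\pi/(2n)\bigr)$, $\lambda_p=2-2\cos(p\pi/n)$, the partial-fraction split of $1/(2-\cos\theta)$ at the roots $a,b$ of $z^2-4z+1$, and the roots-of-unity identity $\sum_{p=0}^{2n-1}w_p^{n}/(z-w_p)=2nz^{n-1}/(z^{2n}-1)$. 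I checked the parity bookkeeping: the boundary terms $1$ and $(-1)^n/3$ from extending the sum, together with $\sum_{p=1}^{n-1}(-1)^p$ and the $-\tfrac12(-1)^n$ coming from $3T_n$, cancel identically for both parities of $n$, leaving exactly $2\sqrt3/(a^n-b^n)$, and your $n=2$ sanity check against $C_4$ is right. One cosmetic slip: in Theorem \ref{thm3.1} the inner sum for $H=K_2$ runs only over $q=1$, so there is no ``$q=2$ summand'' to cancel --- what cancels is the common term $\tfrac12 R_{P_n}[a_1,a_n]$ (equivalently the $\mu=0$ part of the full eigen-expansion); this does not affect the result. The trade-off is clear: citing \cite{lsh} (or \cite{zci}) is shorter, while your spectral route keeps the paper self-contained and uses only machinery (Theorem \ref{thm3.1}) already deployed elsewhere in it.
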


\section{Resistance distance properties in weighted cone graph}
First, we define a weighted cone graph $C^{m}_{G}$ on graph $G$. Given an isolated vertex $b$, then the cone graph \cite{caa} is obtained by connecting vertex $b$ to each vertex in $G$. For weighted cone graph $C^{m}_{G}$, all edges belong to $E(G)$ have same weight 1, and all edges $\{[u,b]|u\in V(G)\}$ have same weight $\frac{1}{m}$, where $m> 1$ is an integer. Then we have the following result.
 \begin{lem}\label{lem3.6}
Let $C^{m}_{G}$ be a weighted cone graph. Then for distinct vertices $u,v \in V(G)$
\begin{equation*}
R_{C^{m}_{G}}[u,v]=\sum_{k=1}^{n-1}\frac{(\Psi_{ku}-\Psi_{kv})^2}{\lambda_{i}+m},
\end{equation*}
\begin{equation*}
R_{C^{m}_{G}}[u,b]=\sum_{k=1}^{n-1}\frac{\Psi_{ku}^2}{\lambda_{k}+m}+\frac{1}{nm}.
\end{equation*}
where graph $G$ uses the notation of the section \ref{section2}.
\end{lem}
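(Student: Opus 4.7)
The plan is to diagonalize the weighted Laplacian of $C_G^m$ explicitly using the spectral data of $G$, and then to substitute into the standard spectral formula for effective resistance,
\[
R[x,y]=\sum_{\mu>0}\frac{(\phi(x)-\phi(y))^{2}}{\mu},
\]
summed over any complete orthonormal system of eigenpairs of the weighted Laplacian with positive eigenvalue. Since each cone edge $[u,b]$ has resistance $\tfrac{1}{m}$ (equivalently, conductance $m$) while the edges of $G$ remain unit, the weighted Laplacian of $C_G^m$ has the block form
\[
L_{C_G^m}=\begin{pmatrix} L_G+mI_n & -m\,\mathbf{e}_n\\ -m\,\mathbf{e}_n^{T} & nm\end{pmatrix},
\]
and this block structure is exactly what lets its spectrum be read off from that of $L_G$.

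For $1\le k\le n-1$, the orthonormal eigenvector $\Psi_k$ of $L_G$ is orthogonal to $\mathbf{e}_n$, so the lifted vector obtained by appending a $0$-entry is an eigenvector of $L_{C_G^m}$ with eigenvalue $\lambda_k+m$. The remaining action of $L_{C_G^m}$ is confined to the two-dimensional complement spanned by the lifted $\Psi_n$ and the indicator vector of $b$. The induced $2\times2$ matrix on this subspace has trace $m(n+1)$ and determinant $0$, the latter because $C_G^m$ is connected and hence $L_{C_G^m}$ has a one-dimensional kernel which must lie in this subspace. Thus the final two eigenvalues are $0$ and $m(n+1)$; the null eigenvector is the global all-ones direction, and the eigenvector associated with $m(n+1)$ can be normalized as
\[
\Theta:=\frac{1}{\sqrt{n(n+1)}}(\underbrace{1,\ldots,1}_{n},-n)^{T}.
\]

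Substituting this orthonormal basis into the spectral formula, the $\Theta$-contribution vanishes for any pair $u,v\in V(G)$ since $\Theta_u=\Theta_v=\tfrac{1}{\sqrt{n(n+1)}}$; only the $\Psi_k$-terms survive, producing the first claimed formula. For the pair $(u,b)$, the $\Psi_k$-terms contribute $\sum_{k=1}^{n-1}\Psi_{ku}^{2}/(\lambda_k+m)$ (each $\Psi_k$ is supported on $V(G)$), and the $\Theta$-term contributes $(\Theta_u-\Theta_b)^{2}/[m(n+1)]=\tfrac{1}{nm}$, giving the second formula. The only genuinely non-routine step is the $2\times2$ diagonalization in the complementary subspace; however, once one observes that $0$ must appear as an eigenvalue there, the nontrivial eigenvalue is forced to equal the trace, so even this step essentially solves itself.
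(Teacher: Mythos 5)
Your proof is correct, but it follows a genuinely different route from the paper. The paper never touches the Laplacian of $C^{m}_{G}$ directly: it embeds the problem into the join $G+\overline{K_{m}}$, replaces the complete bipartite part $K_{n,m}$ by Gervacio's star-like network $K^{*}_{n,m}$ (Theorem \ref{tm2.2}), recognizes $C^{m}_{G}$ as the piece induced by $V(G)\cup\{b\}$, and then transfers the known join-graph resistance formulas of Lemma \ref{tm1.2} across the principles of substitution and elimination; the $\frac{1}{nm}$ term emerges from the negative resistor $r[b,c]=-\frac{1}{nm}$ on the series path from $b$ to $\overline{K_{m}}$. You instead diagonalize the weighted Laplacian
\[
L_{C_G^m}=\begin{pmatrix} L_G+mI_n & -m\,\mathbf{e}_n\\ -m\,\mathbf{e}_n^{T} & nm\end{pmatrix}
\]
outright: the lifted $\Psi_k$ give eigenvalues $\lambda_k+m$, the residual $2\times2$ block (which in the basis $\{\mathbf{e}_n/\sqrt{n}\oplus 0,\;0\oplus 1\}$ is $\bigl(\begin{smallmatrix} m & -m\sqrt{n}\\ -m\sqrt{n} & nm\end{smallmatrix}\bigr)$, with trace $m(n+1)$ and determinant $0$ as you assert) yields the eigenpair $\bigl(m(n+1),\Theta\bigr)$, and the pseudoinverse formula $R[x,y]=\sum_{\mu>0}(\phi(x)-\phi(y))^2/\mu$ then gives both identities; I verified that the $\Theta$-term contributes $0$ for $u,v\in V(G)$ and exactly $\frac{1}{nm}$ for the pair $(u,b)$. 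Your argument is self-contained and dispenses entirely with Theorem \ref{tm2.2} and Lemma \ref{tm1.2}; as a bonus it works for any real weight $m>0$, whereas the paper's detour through $\overline{K_{m}}$ forces $m$ to be a positive integer. What the paper's approach buys is reuse of already-quoted machinery and no need to justify the spectral resistance formula for weighted Laplacians — if you keep your version, you should cite or prove that formula explicitly, since the paper only states its unweighted product/join specializations.
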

\begin{figure*}[h]
  \setlength{\abovecaptionskip}{0cm} %
  \centering
 $$\includegraphics[width=3.6in]{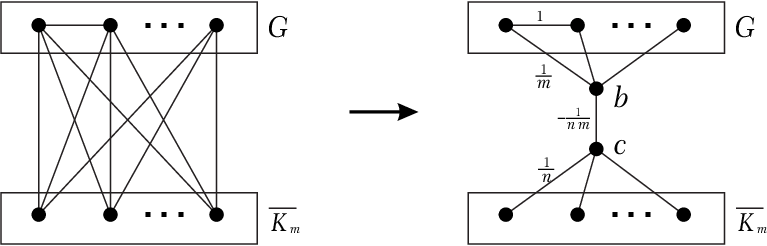}$$\\
 \caption{Illustration of network simplification for graph $G+\overline{K_{m}}$.}     \label{Fig.9}
\end{figure*}
\begin{proof}
In order to obtain the resistance distance in graph $C^{m}_{G}$,  we consider the join graph $G+\overline{K_{m}}$. Since the edges between $V(G)$ and $V(\overline{K_{m}})$ form a complete bipartite graph $K_{n,m}$, by using Theorem \ref{tm2.2}, we substitute complete bipartite graphs $K_{n,m}$ with $K^{*}_{n,m}$, then obtain the equivalent network $N^{*}$ of $G+\overline{K_{m}}$ (see Figure \ref{Fig.9}), and weights on edges satisfy as\\
$r[u,v]=1$; $r[b,c]=-\frac{1}{nm}$; $r[u,b]=\frac{1}{m}$; $r[c,w]=\frac{1}{n}$,\\
where $u,v \in V(G)$ and $w\in V(\overline{K_{m}})$.\\
Observe that vertex set $V(G)\cup \{b\}$ in $N^{*}$ exactly induced the graph $C^{m}_{G}$. By principles of substitution and elimination, for $u, v\in V(G)$, we have
\begin{equation*}
R_{C^{m}_{G}}[u,v]=R_{N^{*}}[u,v]=R_{G+\overline{K_{m}}}[u,v]=\sum_{k=1}^{n-1}\frac{(\Psi_{ku}-\Psi_{kv})^2}{\lambda_{k}+m}.
\end{equation*}
On the other hand, for $u\in V(G)$ and $w \in V(\overline{K_{m}})$,  we have
\begin{align}\label{eq1.5}
R_{G+\overline{K_{m}}}[u,w]&= R_{N^{*}}[u,w]\notag\\
 &=R_{N^{*}}[u,b]+R_{N^{*}}[b,c]+R_{N^{*}}[c,w] \nonumber \\
              &=R_{C^{m}_{G}}[u,b]+\frac{1}{n}-\frac{1}{nm}.
\end{align}
Since $\mu_{k}=0$ for $1\leq k \leq m$ and $\sum_{k=1}^{m-1}\Phi_{kw}^{2}=1-\frac{1}{m}$, by Lemma \ref{tm1.2},  we have
\begin{align}\label{eq3.6}
R_{G+\overline{K_{m}}}[u,w]&=\sum_{k=1}^{n-1}\frac{\Psi_{ku}^2}{\lambda_{k}+m}+ \sum_{k=1}^{m-1}\frac{\Phi_{kw}^2}{\mu_{k}+n}+\frac{1}{nm} \notag\\
              &= \sum_{k=1}^{n-1}\frac{\Psi_{ku}^2}{\lambda_{k}+m} +\frac{1-\frac{1}{m}}{n}+\frac{1}{nm}  \nonumber \\
              &=\sum_{k=1}^{n-1}\frac{\Psi_{ku}^2}{\lambda_{k}+m}+ \frac{1}{n}.
\end{align}
Combining Eqs. (\ref{eq1.5}) and (\ref{eq3.6}), we conclude
$$R_{C^{m}_{G}}[u,b]=\sum_{k=1}^{n-1}\frac{\Psi_{ku}^2}{\lambda_{k}+m}+\frac{1}{nm}.$$
This completes the proof.
\end{proof}

In the following, we devote ourselves to weighted cone graphs $C^{m}_{P_n}$ for $n$-vertex path graph $P_n=a_1a_2\ldots a_n$. In general, $C^{m}_{P_n}$ is also called a weighted fan graph.  As an illustrative example, graph $C^{m}_{P_{n+1}}$ shown in Figure \ref{Fig.3} (a). In \cite{wsu}, the authors in this paper studied some resistance distance monotonicity properties on $C^{m}_{P_n}$. Here we give some resistance distance asymptotic properties for networks $C^{m}_{P_n}$ and $C^{m}_{P_{n+1}}$ by using the electrical network method. These results play essential rules in the proof of main results.

\begin{lem} \label{lem3.1}
For weighted cone graphs $C^{m}_{P_n}$ and $C^{m}_{P_{n+1}}$, we have
\begin{equation*}0 < R_{C^{m}_{P_{n+1}}}[a_1,a_{n+1}]-R_{C^{m}_{P_{n}}}[a_1,a_n] < \frac{2}{m^{n}}.\end{equation*}
\end{lem}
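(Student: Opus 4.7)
The plan is to use the principle of substitution to reduce $C^m_{P_n}$ to a three-terminal equivalent $\Delta$-network $D_n$ on $\{a_1,a_n,b\}$, derive a recursion from $D_n$ to $D_{n+1}$, and from that recursion extract a compact formula for the increment $\alpha_{n+1}-\alpha_n$, where $\alpha_n:=R_{C^m_{P_n}}[a_1,a_n]$. By the left--right symmetry $a_i\mapsto a_{n+1-i}$ of $C^m_{P_n}$, the triangle $D_n$ carries a resistance $t_n$ on edge $[a_1,a_n]$ and a common resistance $s_n$ on edges $[a_1,b]$ and $[a_n,b]$. Introducing the conductances $\sigma_n:=1/s_n$ and $\tau_n:=1/t_n$, a parallel--series computation on $D_n$ yields
\begin{equation*}
\alpha_n=\frac{2}{\sigma_n+2\tau_n},
\end{equation*}
with initial data $\sigma_2=m$ and $\tau_2=1$.

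To go from $D_n$ to $D_{n+1}$ I would attach $a_{n+1}$ to $D_n$ via edges $[a_n,a_{n+1}]$ of resistance $1$ and $[a_{n+1},b]$ of resistance $1/m$, and then eliminate $a_n$. The three edges meeting at $a_n$ form a $Y$, so a single $Y$-$\Delta$ transformation (\ref{eq2.1}) followed by parallel combination of the resulting $[a_1,b]$ and $[a_{n+1},b]$ edges with the pre-existing ones gives the conductance recursion
\begin{equation*}
\tau_{n+1}=\frac{\tau_n}{1+\sigma_n+\tau_n},\qquad \sigma_{n+1}=\sigma_n\cdot\frac{1+\sigma_n+2\tau_n}{1+\sigma_n+\tau_n}.
\end{equation*}
That the two parallel reductions for $[a_1,b]$ and $[a_{n+1},b]$ produce the same $s_{n+1}$ is a short identity consistent with the symmetry of $C^m_{P_{n+1}}$. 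Substituting into $\alpha_{n+1}=2/(\sigma_{n+1}+2\tau_{n+1})$ and clearing denominators, the difference $\alpha_{n+1}-\alpha_n$ collapses to the key closed form
\begin{equation*}
\alpha_{n+1}-\alpha_n=\frac{\alpha_n\tau_n}{1+\sigma_n},
\end{equation*}
which is manifestly positive, establishing the lower bound.

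For the upper bound, the recursion shows $\sigma_n$ is strictly increasing (since $\sigma_{n+1}-\sigma_n=\sigma_n\tau_n/(1+\sigma_n+\tau_n)>0$), so $\sigma_n\ge\sigma_2=m$ for $n\ge 2$. Consequently $\tau_{n+1}\le\tau_n/(1+m)$, hence $\tau_n\le(1+m)^{2-n}$ for $n\ge 2$, and $\alpha_n=2/(\sigma_n+2\tau_n)<2/\sigma_n\le 2/m$. Combining these estimates,
\begin{equation*}
\alpha_{n+1}-\alpha_n<\frac{(2/m)(1+m)^{2-n}}{1+m}=\frac{2}{m(1+m)^{n-1}}<\frac{2}{m^{n}},
\end{equation*}
the last step using $(1+m)^{n-1}>m^{n-1}$. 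The base case $n=1$ is verified directly: $\alpha_2-\alpha_1=2/(m+2)<2/m$. The main technical hurdle is the algebraic manipulation that reduces the $Y$-$\Delta$ plus parallel step to the clean form $\alpha_{n+1}-\alpha_n=\alpha_n\tau_n/(1+\sigma_n)$; once this identity is in hand, the bounds follow almost immediately from the monotonicities of $\sigma_n$ and $\tau_n$.
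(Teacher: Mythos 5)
Your proof is correct, and I verified the key computations: with conductances $\sigma_n=1/s_n$, $\tau_n=1/t_n$ one indeed gets $\tau_{n+1}=\tau_n/(1+\sigma_n+\tau_n)$, $\sigma_{n+1}=\sigma_n(1+\sigma_n+2\tau_n)/(1+\sigma_n+\tau_n)$, and the increment collapses to $\alpha_{n+1}-\alpha_n=2\tau_n/\bigl((1+\sigma_n)(\sigma_n+2\tau_n)\bigr)=\alpha_n\tau_n/(1+\sigma_n)$; the consistency identity you flag, namely $\sigma_n^2+2\sigma_n\tau_n=m(1+\sigma_n+\tau_n)$, holds at $n=2$ and is preserved by the recursion (and also follows, as you say, from uniqueness of the three-terminal equivalent $\Delta$ together with the reflection automorphism $a_i\mapsto a_{n+1-i}$). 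Your route is genuinely different from the paper's, though both are star--triangle peeling arguments with a geometric decay in $m$. The paper peels from the $a_1$ end, converting each triangle $c_{i}a_{i+2}b$ into a $Y$ so that $C^m_{P_{n+1}}$ reduces to a tree $N_n$ with centers $c_1,\dots,c_n$; its key trick is that the reductions of $C^m_{P_n}$ and $C^m_{P_{n+1}}$ share their first $n-1$ steps, so by left--right symmetry the difference equals twice a single edge resistance $2r_{N_n}[c_{n-1},c_n]$, whose positivity is immediate and whose bound $<1/m^n$ comes from the one-step decay $r_{N_{i+1}}[c_{i+1},b]<r_{N_i}[c_i,b]/m$. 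You instead maintain a symmetric three-terminal $\Delta$ normal form and an autonomous two-variable conductance recursion, which buys an explicit closed form for the increment and in fact the sharper bound $2/(m(1+m)^{n-1})$, at the cost of having to justify the symmetric normal form and the consistency identity; the paper's version avoids those justifications and, moreover, reuses its intermediate estimate $r_{N_{n-1}}[c_{n-1},b]<1/m^{n}$ in Lemmas \ref{lem3.111} and \ref{lem3.11}, which your bookkeeping would have to re-derive separately. Both arguments are sound.
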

\begin{proof}
\begin{figure*}[h]
  \setlength{\abovecaptionskip}{0cm} %
  \centering
 $$\includegraphics[width=6.0in]{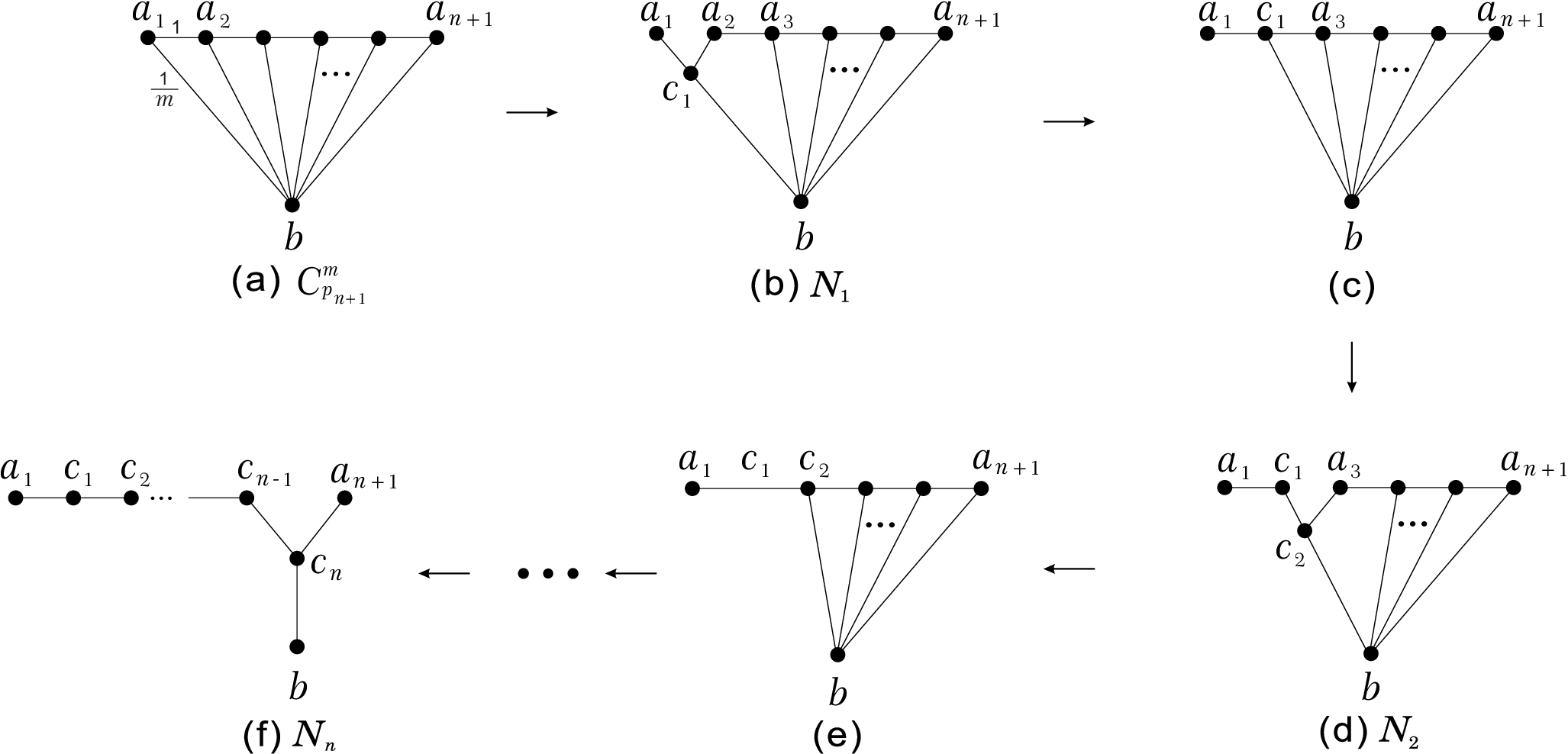}$$\\
 \caption{Illustration of network simplifications in the proof of Lemma \ref{lem3.1}.}    \label{Fig.3}
\end{figure*}
In order to compare $R_{C^{m}_{P_{n+1}}}[a_1,a_{n+1}]$ and $R_{C^{m}_{P_{n}}}[a_1,a_n]$, we first make network simplifications on network $C^{m}_{P_{n+1}}$. Since vertex subset $\{a_1,a_2,b\}$ form a $\Delta$-network, using $\Delta-Y$ transformation, we replace the triangle $a_1a_2b$ in network $C^{m}_{P_{n+1}}$ with a $Y$-network with $c_{1}$ as center, we obtain the simplified network $N_{1}$, as shown in Figure \ref{Fig.3} (b). Then according to the series principle, we replace the path $c_{1}a_2a_3$ with a new weighted edge $c_{1}a_3$ and the network is obtained (see Figure \ref{Fig.3} (c)). Similarly, continue to make  $\Delta-Y$ transformation to the network in Figure \ref{Fig.3} (c) by replacing the triangle $c_{1}a_3 b$ with a $Y$-network with $c_{2}$ as center. Then we obtain the network $N_{2}$, see Figure \ref{Fig.3} (d). Repeat the above operations until no triangle network exists, we could eventually obtain the simplified network $N_{n}$ as shown in Figure \ref{Fig.3} (f). By the series principle, we have
\begin{equation} \label{equ3.1} R_{C^{m}_{P_{n+1}}}[a_1,a_{n+1}] = R_{N_{n}}[a_1,c_{n}]+R_{N_{n}}[c_{n},a_{n+1}].\end{equation}
Similarly, we make the same network simplifications to network $C^{m}_{P_{n}}$ as above, then obtain the simplified network $M_{n-1}$ as shown in Figure \ref{Fig.4}, we have
\begin{figure*}[h]
  \setlength{\abovecaptionskip}{-0.1cm} %
  \centering
 $$\includegraphics[width=1.5in]{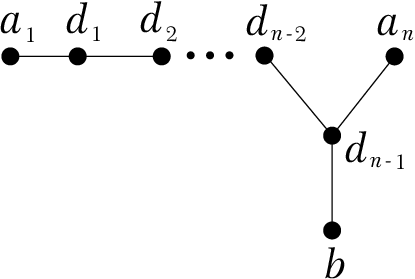}$$\\
 \caption{Simplified network $M_{n-1}$.}     \label{Fig.4}
\end{figure*}
\begin{equation} \label{equ3.2} R_{C^{m}_{P_{n}}}[a_1,a_{n}] = R_{M_{n-1}}[a_1,d_{n-1}]+R_{M_{n-1}}[d_{n-1},a_{n}].\end{equation}
According to the symmetry of network $C^{m}_{P_{n+1}}$ and $C^{m}_{P_{n}}$, we see
\begin{equation*} R_{C^{m}_{P_{n+1}}}[a_1,b]=R_{C^{m}_{P_{n+1}}}[a_{n+1},b]; \ R_{C^{m}_{P_{n}}}[a_1,b]=R_{C^{m}_{P_{n}}}[a_n,b].\end{equation*}
This means
\begin{equation}\label{equ3.3}R_{N_{n}}[a_1,c_{n}]=R_{N_{n}}[c_{n},a_{n+1}]; \ R_{M_{n-1}}[a_1,d_{n-1}]=R_{M_{n-1}}[d_{n-1},a_n].\end{equation}
Then combining Eqs. (\ref{equ3.1})-(\ref{equ3.3}), we have
\begin{equation} \label{equ3.4} R_{C^{m}_{P_{n+1}}}[a_1,a_{n+1}]-R_{C^{m}_{P_{n}}}[a_1,a_n]=2(R_{N_{n}}[a_1,c_{n}]-R_{M_{n-1}}[a_1,d_{n-1}]).\end{equation}
Note that in the same network simplification for networks $C^{m}_{P_{n+1}}$ and $C^{m}_{P_{n}}$, we have
\begin{equation} \label{equ3.5} R_{N_{n}}[a_1,c_{n-1}]=R_{M_{n-1}}[a_1,d_{n-1}];\  R_{N_{n}}[a_1,c_{n}]=R_{N_{n}}[a_1,c_{n-1}]+R_{N_{n}}[c_{n-1},c_{n}].\end{equation}
Substituting Eq. (\ref{equ3.5}) into Eq. (\ref{equ3.4}), we know that
\begin{equation}\label{equ3.6} R_{C^{m}_{P_{n+1}}}[a_1,a_{n+1}]-R_{C^{m}_{P_{n}}}[a_1,a_n]=2R_{N_{n}}[c_{n-1},c_{n}].\end{equation}
Therefore, we have
$$R_{C^{m}_{P_{n+1}}}[a_1,a_{n+1}]-R_{C^{m}_{P_{n}}}[a_1,a_n]>0.$$
On the other hand, in the each network simplification process from network $N_{i}$  to $N_{i+1}$, where $1\leq i \leq n-1$, see Figure \ref{Fig.5}. Since in $\Delta$-network $c_{i}a_{i+2}b$, the weight on the edge $c_{i}a_{i+2}$ is great than $1$, then using Eq. (\ref{eq2.1}), we have
\begin{figure*}[h]
  \setlength{\abovecaptionskip}{-0.1cm} %
  \centering
 $$\includegraphics[width=4.0in]{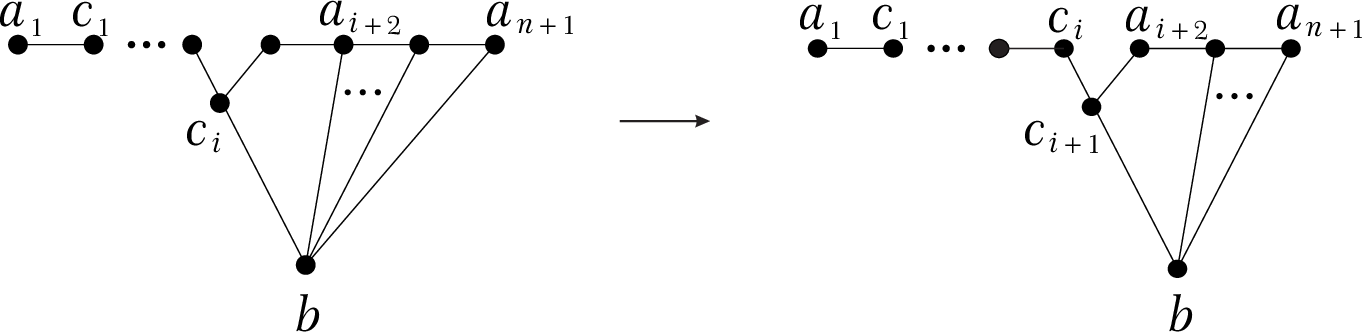}$$\\
 \caption{Illustration of network simplification from $N_{i}$ to $N_{i+1}$.}     \label{Fig.5}
\end{figure*}
$$r_{N_{1}}[c_{1},b]<\frac{1}{m^{2}},\ r_{N_{i+1}}[c_{i+1},b]<\frac{r_{N_{i}}[c_{i},b]}{m}.$$
Hence we could conclude that
\begin{equation}\label{equ3.7}r_{N_{n-1}}[c_{n-1},b]<\frac{1}{m^{n}}.\end{equation}
Again by using Eq. (\ref{eq2.1}), we obtain
$$r_{N_{n}}[c_{n-1},c_{n}]< r_{N_{n-1}}[c_{n-1},b]<\frac{1}{m^{n}}.$$
Observe that $r_{N_{n}}[c_{n-1},c_{n}]=R_{N_{n}}[c_{n-1},c_{n}]$, then according to Eq. (\ref{equ3.6}), we have
$$R_{C^{m}_{P_{n+1}}}[a_1,a_{n+1}]-R_{C^{m}_{P_{n}}}[a_1,a_n]=2\cdot r_{N_{n}}[c_{n-1},c_{n}]<\frac{2}{m^{n}}.$$
This completes the proof.
 \end{proof}

\begin{lem} \label{lem3.111}
For weighted cone graphs $C^{m}_{P_n}$ and $C^{m}_{P_{n+1}}$, we have
\begin{equation*}0<R_{C^{m}_{P_n}}[a_1,b] -R_{C^{m}_{P_{n+1}}}[a_1,b]<\frac{1}{m^{n}}.\end{equation*}
\end{lem}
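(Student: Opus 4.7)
My strategy mirrors that of Lemma~\ref{lem3.1}: I would apply the identical sequence of $\Delta$--$Y$ transformations and series reductions to both $C^{m}_{P_n}$ and $C^{m}_{P_{n+1}}$, obtaining the trees $M_{n-1}$ (Figure~\ref{Fig.4}) and $N_n$ (Figure~\ref{Fig.3}(f)), respectively. The key observation is that the first $n-1$ reduction steps involve only the shared vertices $a_1, \ldots, a_n, b$ and operate on triangles with the same edge weights in both networks, so they produce identical intermediate weights on the common subpath, yielding in particular the identity $R_{M_{n-1}}[a_1, d_{n-1}] = R_{N_n}[a_1, c_{n-1}]$. This ``common segment'' identity is the central technical step and the main bookkeeping hurdle.

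Once it is established, the tree structure of $M_{n-1}$ and $N_n$ lets me decompose $R[a_1, b]$ in each as a sum of edge resistances along the unique path; subtracting and cancelling the common portion leaves
\begin{equation*}
R_{C^{m}_{P_n}}[a_1, b] - R_{C^{m}_{P_{n+1}}}[a_1, b] = r_{M_{n-1}}[d_{n-1}, b] - r_{N_n}[c_{n-1}, c_n] - r_{N_n}[c_n, b].
\end{equation*}
Let $q := r_{M_{n-1}}[d_{n-1}, b]$, which also equals the weight $r[c_{n-1}, b]$ present in the simplification of $C^{m}_{P_{n+1}}$ just before its final step. In that final step, after the series reduction $c_{n-1}$--$a_n$--$a_{n+1}$, we apply $\Delta$--$Y$ to the triangle $\{c_{n-1}, a_{n+1}, b\}$ with edge weights $p = r[c_{n-1}, a_n] + 1 \geq 1$, $q$, and $s = 1/m$. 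Formula~(\ref{eq2.1}) then gives $r_{N_n}[c_{n-1}, c_n] = pq/(p+q+s)$ and $r_{N_n}[c_n, b] = qs/(p+q+s)$, so the difference collapses to
\begin{equation*}
q - \frac{q(p+s)}{p+q+s} = \frac{q^2}{p+q+s} > 0,
\end{equation*}
which establishes the strict lower bound at once.

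For the upper bound, $p \geq 1$ and $s > 0$ force $p+q+s > 1$, so $q^2/(p+q+s) < q^2$. From the recursive estimate in the proof of Lemma~\ref{lem3.1}, namely $r_{N_k}[c_k, b] < 1/m^{k+1}$ for $k \geq 1$, I get $q < 1/m^n$ when $n \geq 2$; for the base case $n = 1$ one checks $q = 1/m$ directly. Either way $q^2 \leq 1/m^{2n} < 1/m^n$ since $m \geq 2$, so $q^2/(p+q+s) < 1/m^n$, as required. The only hard part, as noted, is the bookkeeping needed to align the two simplification processes on their common initial segment so that the difference telescopes to $q^2/(p+q+s)$; once that is secured, the final estimates are immediate from Lemma~\ref{lem3.1}.
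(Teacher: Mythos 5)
Your proposal is correct and follows the paper's own argument almost step for step: the same aligned $\Delta$--$Y$ and series reductions producing $M_{n-1}$ and $N_n$, the same common-segment identity $R_{N_n}[a_1,c_{n-1}]=R_{M_{n-1}}[a_1,d_{n-1}]$ (the paper's Eq.~(\ref{equ3.5}), which the paper asserts with the same level of bookkeeping detail you give), and the same source for the upper bound, namely the recursive estimate $r_{N_{n-1}}[c_{n-1},b]<1/m^{n}$ from the proof of Lemma~\ref{lem3.1}. The one genuine point of divergence is the lower bound. The paper handles it by noting that $C^{m}_{P_n}$ is a proper subnetwork of $C^{m}_{P_{n+1}}$ and appealing to series/parallel (Rayleigh) monotonicity, which on its face only yields a non-strict inequality without an extra remark about current actually flowing through the added edges; you instead push the $\Delta$--$Y$ computation one step further and identify the difference in closed form as $q^{2}/(p+q+s)$ with $q=r_{M_{n-1}}[d_{n-1},b]$, $p=r[c_{n-1},a_{n}]+1>1$ and $s=1/m$, which delivers strict positivity for free and even the sharper bound $q^{2}/(p+q+s)<q^{2}<1/m^{2n}$ (the paper's coarser estimate, that the difference is less than $q<1/m^{n}$, already suffices, and your explicit formula also covers the degenerate case $n=1$ that the reduction pictures do not really address). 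In short: same decomposition, same key identity, same final estimates, with your exact expression for the difference being a small but real tightening of the paper's lower-bound step.
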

 \begin{proof}
In networks $N_n$ (resp. $M_{n-1}$) as shown in Figure \ref{Fig.3} (f) (resp. Figure \ref{Fig.4}). We have
 \begin{equation}\label{eq3.8}R_{C^{m}_{P_{n+1}}}[a_1,b] = R_{N_{n}}[a_1,c_{n}]+R_{N_{n}}[c_{n},b],\end{equation}
 \begin{equation}\label{eq3.9}R_{C^{m}_{P_n}}[a_1,b] = R_{M_{n-1}}[a_1,d_{n-1}]+R_{M_{n-1}}[d_{n-1},b].\end{equation}
By using Eqs. (\ref{equ3.5}), (\ref{eq3.8}) and (\ref{eq3.9}), we have
\begin{equation*}  R_{C^{m}_{P_n}}[a_1,b] -  R_{C^{m}_{P_{n+1}}}[a_1,b] < R_{M_{n-1}}[d_{n-1},b].\end{equation*}
In addition, as obtained Eq. (\ref{equ3.7}) in proof in Lemma \ref{lem3.1}, we know
 \begin{equation}\label{eq3.10}R_{M_{n-1}}[d_{n-1},b]=R_{N_{n-1}}[c_{n-1},b]=r_{N_{n-1}}[c_{n-1},b]<\frac{1}{m^{n}}. \end{equation}
Therefore, we have
$$R_{C^{m}_{P_n}}[a_1,b] -R_{C^{m}_{P_{n+1}}}[a_1,b]<\frac{1}{m^{n}}.$$
On the other hand, $C^{m}_{P_n}$ is a proper subnetwork of $C^{m}_{P_{n+1}}$. By the series and parallel principles, it is not hard to see that
$$R_{C^{m}_{P_n}}[a_1,b] -R_{C^{m}_{P_{n+1}}}[a_1,b]> 0.$$
The proof is complete.
\end{proof}
\begin{lem} \label{lem3.11}
For weighted cone graph $C^{m}_{P_n}$, we have
\begin{equation*} 0<2 R_{C^{m}_{P_{n}}}[a_1,b]-R_{C^{m}_{P_n}}[a_1,a_n] < \frac{2}{m^{n}}.\end{equation*}
\end{lem}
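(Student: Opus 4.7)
The plan is to reuse the simplified network $M_{n-1}$ from the proof of Lemma \ref{lem3.1} and express the combination $2R_{C^m_{P_n}}[a_1,b]-R_{C^m_{P_n}}[a_1,a_n]$ entirely in terms of the ``local'' resistance at $b$ in that reduced network. Recall from the iterated $\Delta$-$Y$ and series reductions that $M_{n-1}$ has the path-plus-pendant structure $a_1-d_1-d_2-\cdots-d_{n-1}-a_n$ together with a single edge $d_{n-1}-b$. In particular, $d_{n-1}$ is a cut vertex of $M_{n-1}$, separating the three pieces $\{a_1,d_1,\ldots,d_{n-2}\}$, $\{a_n\}$, and $\{b\}$.

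First I would translate both resistances of interest into $M_{n-1}$ via the principle of substitution, and then use the cut-vertex additivity at $d_{n-1}$ to obtain
\begin{align*}
R_{C^m_{P_n}}[a_1,a_n] &= R_{M_{n-1}}[a_1,d_{n-1}]+R_{M_{n-1}}[d_{n-1},a_n],\\
R_{C^m_{P_n}}[a_1,b] &= R_{M_{n-1}}[a_1,d_{n-1}]+R_{M_{n-1}}[d_{n-1},b].
\end{align*}
Next I would invoke the reflective symmetry $a_i\leftrightarrow a_{n+1-i}$ of $C^m_{P_n}$, which gives $R_{C^m_{P_n}}[a_1,b]=R_{C^m_{P_n}}[a_n,b]$; combined with the cut-vertex splitting, this forces $R_{M_{n-1}}[a_1,d_{n-1}]=R_{M_{n-1}}[d_{n-1},a_n]$. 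Substituting back yields the clean identity
\begin{equation*}
2R_{C^m_{P_n}}[a_1,b]-R_{C^m_{P_n}}[a_1,a_n]=2R_{M_{n-1}}[d_{n-1},b].
\end{equation*}

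With this identity in hand, both inequalities become immediate. Positivity follows because the edge $d_{n-1}-b$ in $M_{n-1}$ carries a positive resistance (all $\Delta$-$Y$ and series reductions starting from positive weights in $C^m_{P_n}$ preserve positivity), and since $d_{n-1}$ is a cut vertex, the only path from $d_{n-1}$ to $b$ uses that single edge, so $R_{M_{n-1}}[d_{n-1},b]=r_{M_{n-1}}[d_{n-1},b]>0$. For the upper bound, I would observe that the weight of this pendant edge depends only on the first $n-1$ $\Delta$-$Y$ steps, which are identical in the simplification chains producing $M_{n-1}$ and $N_{n-1}$; hence $r_{M_{n-1}}[d_{n-1},b]=r_{N_{n-1}}[c_{n-1},b]$, and the bound $r_{N_{n-1}}[c_{n-1},b]<1/m^n$ already established in Eq.~(\ref{equ3.7}) of the proof of Lemma \ref{lem3.1} (cf.\ Eq.~(\ref{eq3.10})) gives exactly $2R_{M_{n-1}}[d_{n-1},b]<2/m^n$.

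I do not anticipate a genuine obstacle here: the whole argument is a short bookkeeping exercise built on top of the reduction machinery already developed for Lemmas \ref{lem3.1} and \ref{lem3.111}. The only point requiring care is checking that the pendant-edge weight in $M_{n-1}$ really matches the one in $N_{n-1}$, which I would justify by noting that the $\Delta$-$Y$ transformations are performed from the $a_1$-end inward and that the recursion defining the pendant weight at step $i$ does not see any vertex beyond $a_{i+1}$.
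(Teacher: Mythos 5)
Your proposal is correct and follows essentially the same route as the paper: both reduce $C^m_{P_n}$ to the simplified network $M_{n-1}$, use the symmetry $R_{M_{n-1}}[a_1,d_{n-1}]=R_{M_{n-1}}[d_{n-1},a_n]$ to obtain the identity $2R_{C^m_{P_n}}[a_1,b]-R_{C^m_{P_n}}[a_1,a_n]=2R_{M_{n-1}}[d_{n-1},b]$, and then invoke the bound $r_{N_{n-1}}[c_{n-1},b]<1/m^n$ from Eq.~(\ref{equ3.7}). Your explicit justification that the pendant-edge weight in $M_{n-1}$ coincides with that in $N_{n-1}$ is a point the paper leaves implicit (cf.\ Eq.~(\ref{eq3.10})), but it is the same argument.
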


\begin{proof}
Similarly, in the network $M_{n-1}$ (see Figure \ref{Fig.4}), by the series principle, we have
 \begin{equation}\label{e3.2}R_{C^{m}_{P_n}}[a_1,b] = R_{M_{n-1}}[a_1,d_{n-1}]+R_{M_{n-1}}[d_{n-1},b],\end{equation}
 \begin{equation} \label{e3.3} R_{C^{m}_{P_n}}[a_1,a_n] = R_{M_{n-1}}[a_1,d_{n-1}]+R_{M_{n-1}}[d_{n-1},a_n].\end{equation}
Since $ R_{M_{n-1}}[a_1,d_{n-1}]= R_{M_{n-1}}[d_{n-1},a_n]$, we have
\begin{equation} \label{e3.4} R_{C^{m}_{P_n}}[a_1,a_n] = 2\cdot R_{M_{n-1}}[a_1,d_{n-1}].\end{equation}
Combining Eqs. (\ref{e3.2}) and (\ref{e3.4}),  then clearly
$$  2 R_{C^{m}_{P_n}}[a_1,b]-R_{C^{m}_{P_n}}[a_1,a_n]= 2R_{M_{n-1}}[d_{n-1},b]>0.$$
Together with Eq. (\ref{equ3.7}) in the proof of Lemma \ref{lem3.1}, we conclude that
$$2 R_{C^{m}_{P_n}}[a_1,b]-R_{C^{m}_{P_n}}[a_1,a_n] < \frac{2}{m^{n}}.$$
This completes the proof.
\end{proof}

\section{Main results} \label{section4}
In this section, we first prove a generalized version of Conjecture \ref{conj1} by using  electrical network approaches combined with  some asymptotic properties on resistance distances obtained in previous section. Without loss of generality, we always suppose that $\lambda_{1}, \lambda_{2},...,\lambda_{n}=0$ and $\mu_{1}, \mu_{2},...,\mu_{n}, \mu_{n+1}=0$ be Laplacian eigenvalues of path $P_{n}$ and $P_{n+1}$ arranged in nonincreasing order, respectively. Suppose that $\Psi_{1}, \Psi_{2},...,\Psi_{n}$ and $\Phi_{1}, \Phi_{2},...,\Phi_{n},\Phi_{n+1}$ denote orthogonal eigenvectors of $P_{n}$ and $P_{n+1}$, where $\Psi_{i}=(\Psi_{ia_{1}},\Psi_{ia_{2}},...,\Psi_{ia_{n}})^T$ and  $\Phi_{j}=(\Phi_{ja_{1}},\Phi_{ja_{2}},...,\Phi_{ja_{n+1}})^T$ are eigenvectors affording $\lambda_{i}$ and $\mu_{j}$, respectively.

In what follows, we introduce an important network, named hypercube \cite{fha}, which is defined recursively in terms of the Cartesian product as follows:
$$Q_1=K_2,\ Q_k=Q_{k-1}\square Q_1=\underbrace{K_2\square K_2 \square \cdots \square K_2}_{k}, \ k\geq 2.$$
As we all know, the class of hypercubes is the most common, multipurpose and efficient topological structure class of interconnection networks. Studying the various properties of hypercubes networks has always been a topic of great interest in the field of physics and mathematics, such as embeddings\cite{mli}, retracts\cite{hjb}, treewidth\cite{lsc}, matching extendability \cite{jva}, and so on.

In \cite{mss}, Sardar et al. gave an explicit combinatorial formula for calculating resistance diameter on hypercubes. Let $x$ and $y$ be two vertices at maximum distance in $Q_k$, they proved that
\begin{equation}\label{D}D_{r}(Q_k)=R_{Q_{k}}[x,y]= \sum_{i=1}^{k}\frac{(k-i)!(i-1)!}{k!}.\end{equation}
For convenience, let the vertex set of $Q_k$  be $V(Q_{k})=\{b_1, b_2,\cdots,b_{2^{k}}\}$. We also assume that $\nu_{1}, \nu_{2},...,\nu_{2^{k}}=0$ be Laplacian eigenvalues of hypercube $Q_{k}$ arranged in nonincreasing order. Suppose that $\Upsilon_{1}, \Upsilon_{2},...,\Upsilon_{2^{k}}$  denote orthogonal eigenvectors of $Q_{k}$, where $\Upsilon_{i}=(\Upsilon_{ib_{1}},\Upsilon_{ib_{2}},...,\Upsilon_{ib_{2^{k}}})^T$  be eigenvectors affording $\nu_{i}$.

Here we consider Cartesian product $U_{n}=P_{n}\square Q_{k}$. Obviously, for $k=2$, $U_{n}=P_{n}\square Q_{2}$ is the block tower graph $G_n$ studied in this paper. Then we give the main result.

\begin{thm}\label{Tm4.1}
Let $U_{n}=P_{n}\square Q_{k}$. Then for $i,j\in \{1,2,\cdots , 2^{k}\}$, we have
$$\lim_{n \rightarrow \infty}\big(R_{U_{n+1}}[(a_{1},b_i),(a_{n+1},b_j)]-R_{U_{n}}[(a_{1},b_i),(a_{n},b_j)]\big)=\frac{1}{2^{k}}.$$
\end{thm}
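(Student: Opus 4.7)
The plan is to apply the Yang--Klein product formula (Theorem~\ref{thm3.1}) to $U_n=P_n\square Q_k$ and to exploit the Hadamard-type structure of the Laplacian eigenvectors of $Q_k$. Iterating Lemma~\ref{lem2.6} on the decomposition $Q_k=K_2\square\cdots\square K_2$, one may choose the orthonormal eigenvectors $\Upsilon_1,\ldots,\Upsilon_{2^k}$ to be tensor products of the two $K_2$-eigenvectors $\tfrac{1}{\sqrt 2}(1,1)^T$ and $\tfrac{1}{\sqrt 2}(1,-1)^T$, so that every entry satisfies $|\Upsilon_{q,b_\ell}|=1/\sqrt{2^k}$. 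Consequently, for any fixed indices $i,j$ and any $q$ there is a sign $\epsilon_q\in\{+1,-1\}$, independent of $n$, with $\Upsilon_{q,b_i}\Upsilon_{q,b_j}=\epsilon_q/2^k$; moreover every nonzero Laplacian eigenvalue of $Q_k$ is an even integer $\ge 2$, so the weighted cone graphs $C^{\nu_q}_{P_n}$ of Section~3 are well-defined.

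Plugging $R_{P_n}[a_1,a_n]=n-1$ and the identity $\Upsilon_{q,b_i}\Upsilon_{q,b_j}=\epsilon_q/2^k$ into Theorem~\ref{thm3.1}, and using $\epsilon_q^2=1$ to factor the numerator, I would obtain
\begin{equation*}
R_{U_n}[(a_1,b_i),(a_n,b_j)] \;=\; \frac{n-1}{2^k}\;+\;\frac{R_{Q_k}[b_i,b_j]}{n}\;+\;\frac{1}{2^k}\sum_{q=1}^{2^k-1} T_q(n),
\end{equation*}
where $T_q(n):=\sum_{p=1}^{n-1}(\Psi_{p,a_1}-\epsilon_q\Psi_{p,a_n})^2/(\lambda_p+\nu_q)$. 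When $\epsilon_q=+1$, Lemma~\ref{lem3.6} directly gives $T_q(n)=R_{C^{\nu_q}_{P_n}}[a_1,a_n]$. When $\epsilon_q=-1$, I would combine the algebraic identity $(\Psi_{p,a_1}+\Psi_{p,a_n})^2 = 2\Psi_{p,a_1}^2+2\Psi_{p,a_n}^2-(\Psi_{p,a_1}-\Psi_{p,a_n})^2$ with the symmetry $R_{C^{\nu_q}_{P_n}}[a_1,b]=R_{C^{\nu_q}_{P_n}}[a_n,b]$ and Lemma~\ref{lem3.6} to reduce $T_q(n)$ to
\begin{equation*}
T_q(n)\;=\;4\,R_{C^{\nu_q}_{P_n}}[a_1,b]\;-\;\frac{4}{n\,\nu_q}\;-\;R_{C^{\nu_q}_{P_n}}[a_1,a_n].
\end{equation*}
Because $\epsilon_q$ depends only on $i,j,q$, the same decomposition is available for $R_{U_{n+1}}[(a_1,b_i),(a_{n+1},b_j)]$ with $n$ replaced by $n+1$.

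Subtracting, the $(n-1)/2^k$ terms contribute exactly $1/2^k$, the $R_{Q_k}[b_i,b_j]$ terms contribute $O(1/n^2)$, and there remains $\frac{1}{2^k}\sum_q[T_q(n+1)-T_q(n)]$. Lemma~\ref{lem3.1} supplies $0<R_{C^{\nu_q}_{P_{n+1}}}[a_1,a_{n+1}]-R_{C^{\nu_q}_{P_n}}[a_1,a_n]<2/\nu_q^{n}$ and Lemma~\ref{lem3.111} supplies $0<R_{C^{\nu_q}_{P_n}}[a_1,b]-R_{C^{\nu_q}_{P_{n+1}}}[a_1,b]<1/\nu_q^{n}$. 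Since $\nu_q\ge 2$, both bounds and the incidental piece $\tfrac{4}{\nu_q}\bigl(\tfrac{1}{n}-\tfrac{1}{n+1}\bigr)$ tend to $0$; hence $T_q(n+1)-T_q(n)\to 0$ for every $q$, and summing over the finite index set $q=1,\ldots,2^k-1$ produces the claimed limit $1/2^k$.

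The main technical obstacle is the antisymmetric case $\epsilon_q=-1$, in which the sum defining $T_q(n)$ is not one of the three quantities covered by Lemma~\ref{lem3.6}. The algebraic reduction above is the crucial bridge: it re-expresses $T_q(n)$ as a linear combination of three cone-graph resistances whose asymptotic behaviour is already pinned down by Lemmas~\ref{lem3.1} and~\ref{lem3.111}. Once this is in place, the remainder of the argument is a mechanical finite summation, and the hypercube structure enters only through the sign $\epsilon_q$ and the uniform normalization $1/2^k$.
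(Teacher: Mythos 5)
Your proposal is correct and follows essentially the same route as the paper's own proof: the Yang--Klein product formula, the identification of the two spectral sums $\sum_p(\Psi_{pa_1}\mp\Psi_{pa_n})^2/(\lambda_p+m)$ with cone-graph resistances via Lemma~\ref{lem3.6} (your $\epsilon_q=-1$ reduction is exactly the paper's Eq.~(\ref{eq4.16})), the asymptotic Lemmas~\ref{lem3.1} and~\ref{lem3.111}, and the sign structure $|\Upsilon_{qb_i}|=|\Upsilon_{qb_j}|$ of the hypercube eigenvectors. Your explicit normalization $|\Upsilon_{q,b_\ell}|=1/\sqrt{2^k}$ is in fact stated more accurately than in the paper, but the argument is the same.
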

\begin{proof}
By utilizing Theorem \ref{thm3.1}, we have
\begin{align} \label{eq4.2}
R_{U_{n+1}}[(a_{1},b_i),&(a_{n+1},b_j)]-R_{U_{n}}[(a_{1},b_i),(a_{n},b_j)] \nonumber \\
&=\frac{1}{2^{k}}+\left(\frac{1}{n+1}R_{Q_{k}}[b_i,b_j]-\frac{1}{n}R_{Q_{k}}[b_i,b_j]\right)\nonumber \\
& + \sum_{q=1}^{2^{k}-1}\left(\sum_{p=1}^{n}\frac{\left(\Phi_{pa_{1}}\Upsilon_{qb_i}-\Phi_{pa_{n+1}}\Upsilon_{qb_j}\right)^2}{\mu_{p}+\nu_{q}}- \sum_{p=1}^{n-1}\frac{\left(\Psi_{pa_{1}}\Upsilon_{qb_i}-\Psi_
{pa_{n}}\Upsilon_{qb_j}\right)^2}{\lambda_{p}+\nu_{q}}\right).
\end{align}
For vertices $b_i$, $b_j \in V(Q_k)$, then Eq. (\ref{D}) implies
$$R_{Q_k}[b_i,b_j] \leq D_r(Q_k) \leq 1.$$
Thus it follows that
\begin{equation}\label{E}
\lim_{n \rightarrow \infty}\left(\frac{1}{n+1}R_{Q_{k}}[b_i,b_j]-\frac{1}{n}R_{Q_{k}}[b_i,b_j]\right)=0.
\end{equation}
By Lemma \ref{lem3.6}, for integer $m> 1$ we have
\begin{equation}\label{eq20}
R_{C^{m}_{P_n}}[a_{1},a_{n}]=\sum_{p=1}^{n-1}\frac{\left(\Psi_{pa_{1}}-\Psi_{pa_{n}}\right)^2}{\lambda_{p}+m},\  R_{C^{m}_{P_{n+1}}}[a_{1},a_{n+1}]=\sum_{p=1}^{n}\frac{\left(\Phi_{pa_{1}}-\Phi_{pa_{n}}\right)^2}{\mu_{p}+m};
\end{equation}
For networks $C^{m}_{P_n}$ and $C^{m}_{P_{n+1}}$, it follows from Lemma \ref{lem3.1} that
\begin{equation*}\lim_{n \rightarrow \infty}\big(R_{C^{m}_{P_{n+1}}}[a_1,a_{n+1}]-R_{C^{m}_{P_{n}}}[a_1,a_n]\big)=0.\end{equation*}
This means
\begin{equation}\label{EqA}\lim_{n \rightarrow \infty}\left(\sum_{p=1}^{n}\frac{\left(\Phi_{pa_{1}}-\Phi_{pa_{n}}\right)^2}{\mu_{p}+m}-\sum_{p=1}^{n-1}\frac{\left(\Psi_{pa_{1}}-\Psi_{pa_{n}}\right)^2}{\lambda_{p}+m}\right)=0.\end{equation}
On the other hand, again by Lemma \ref{lem3.6}, and due to the symmetry of network $C^m_{P_n}$, it is not hard to derive the following equations:
\begin{equation}\label{eq4.16}
4R_{C^{m}_{P_{n}}}[a_1,b]-R_{C^{m}_{P_n}}[a_1,a_n]-\frac{4}{mn}=\sum_{p=1}^{n-1}\frac{\left(\Psi_{pa_{1}}+\Psi_{pa_{n}}\right)^2}{\lambda_{p}+m};
\end{equation}
\begin{equation}
4R_{C^{m}_{P_{n+1}}}[a_1,b]-R_{C^{m}_{P_{n+1}}}[a_1,a_{n+1}]-\frac{4}{m(n+1)}=\sum_{p=1}^{n}\frac{\left(\Phi_{pa_{1}}+\Phi_{pa_{n}}\right)^2}{\mu_{p}+m}.
\end{equation}
Together with Lemmas \ref{lem3.1} and \ref{lem3.111}, we have
\begin{align}\label{EqB}
&\lim_{n \rightarrow \infty}\left(\sum_{p=1}^{n}\frac{\left(\Phi_{pa_{1}}+\Phi_{pa_{n}}\right)^2}{\mu_{p}+m}-\sum_{p=1}^{n-1}\frac{\left(\Psi_{pa_{1}}+\Psi_{pa_{n}}\right)^2}{\lambda_{p}+m}\right) \nonumber \\
&=\lim_{n \rightarrow \infty}4 \big(R_{C^{m}_{P_{n+1}}}[a_1,b]-R_{C^{m}_{P_{n}}}[a_1,b]\big)+  \lim_{n \rightarrow \infty}\big(R_{C^{m}_{P_{n}}}[a_1,a_{n}]-R_{C^{m}_{P_{n+1}}}[a_1,a_{n+1}]\big)  \nonumber \\
& +\lim_{n \rightarrow \infty}\big(\frac{4}{mn}-\frac{4}{m(n+1)}\big)=0.
\end{align}
In addition, note that the clique $K_2$ has two Laplacian eigenvalues $2$ and $0$ with respect to corresponding orthogonal eigenvectors:
$$(-\frac{\sqrt{2}}{2},\frac{\sqrt{2}}{2})^{T}, \ \ \ (\frac{\sqrt{2}}{2},\frac{\sqrt{2}}{2})^{T}.$$
By using Lemma \ref{lem2.6}, for any integer $r$, where $1\leq r \leq 2^{k}-1$, we know $\nu_r$ is an positive integer multiple of $2$ and
$$|\Upsilon_{rb_{i}}|=|\Upsilon_{rb_{j}}|=(\frac{\sqrt{2}}{2})^{r}.$$
Hence, together with Eqs. (\ref{EqA}) and (\ref{EqB}), no matter the choice of  $\Upsilon_{rb_{i}}$ and $\Upsilon_{rb_{j}}$, we obtain
$$\lim_{n \rightarrow \infty}\left(\sum_{p=1}^{n}\frac{\left(\Phi_{pa_{1}}\Upsilon_{rb_i}-\Phi_{pa_{n+1}}\Upsilon_{rb_j}\right)^2}{\mu_{p}+\nu_{r}}-\sum_{p=1}^{n-1}\frac{\left(\Psi_{pa_{1}}\Upsilon_{rb_i}-
\Psi_{pa_{n}}\Upsilon_{rb_j}\right)^2}{\lambda_{p}+\nu_{r}}\right)=0,$$
which yield that
\begin{equation}\label{F}
\lim_{n \rightarrow \infty}\sum_{q=1}^{2^{k}-1}\left(\sum_{p=1}^{n}\frac{\left(\Phi_{pa_{1}}\Upsilon_{qb_i}-\Phi_{pa_{n+1}}\Upsilon_{qb_j}\right)^2}{\mu_{p}+\nu_{q}}- \sum_{p=1}^{n-1}\frac{\left(\Psi_{pa_{1}}\Upsilon_{qb_i}-\Psi_{pa_{n}}\Upsilon_{qb_j}\right)^2}{\lambda_{p}+\nu_{q}}\right)=0.  \end{equation}
Consequently, according to  Eqs. (\ref{eq4.2}), (\ref{E}) and (\ref{F}), we have
$$\lim_{n \rightarrow \infty}\big(R_{U_{n+1}}[(a_{1},b_i),(a_{n+1},b_j)]-R_{U_{n}}[(a_{1},b_i),(a_{n},b_j)]\big)=\frac{1}{2^{k}}.$$
This completes the proof.\end{proof}
As a consequence of Theorem \ref{Tm4.1}, we can directly give the solution to Conjecture \ref{conj1}.
\begin{thm} \label{thm4.2}
Let $G_n=P_{n}\square C_4$. Then
$$\lim_{n \rightarrow \infty}\big(R_{G_{n+1}}[(a_{1},b_1),(a_{n+1},b_3)]-R_{G_{n}}[(a_{1},b_1),(a_{n},b_3)]\big)=\frac{1}{4}.$$
\end{thm}

Finally, we determine all resistance diametrical pairs in $G_n$ for $n\geq 2 $. The following result gives an monotonicity properties of resistance distances between $G_n$ and $G_{n+1}.$
\begin{lem} \label{lem4.6}
Let $G_n=P_{n}\square C_4$. Then
 \begin{equation*}
 R_{G_{n+1}}[(a_{1},b_1),(a_{n+1},b_3)]>R_{G_{n}}[(a_{1},b_1),(a_{n},b_3)].
 \end{equation*}
\end{lem}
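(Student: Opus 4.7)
The plan is to apply Theorem~\ref{thm3.1} with $G=P_n$ and $H=C_4$, substitute the explicit Laplacian eigenvectors of $C_4$ (with eigenvalues $4,2,2,0$), and convert the resulting spectral sums into weighted cone-graph resistances via Lemma~\ref{lem3.6} and identity~\eqref{eq4.16}. Only two of the three nontrivial $C_4$-eigenvectors have nonzero entries at $\{b_1,b_3\}$: the one for $\nu=4$ gives $(\tfrac{1}{2},\tfrac{1}{2})$, and $(1,0,-1,0)/\sqrt{2}$ for $\nu=2$ gives $(\tfrac{1}{\sqrt{2}},-\tfrac{1}{\sqrt{2}})$. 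After grouping, the first spectral sum becomes $\tfrac{1}{4}R_{C^4_{P_n}}[a_1,a_n]$ by Lemma~\ref{lem3.6}, while the second becomes $\tfrac{1}{2}\bigl(4R_{C^2_{P_n}}[a_1,b]-R_{C^2_{P_n}}[a_1,a_n]-\tfrac{2}{n}\bigr)$ by~\eqref{eq4.16}. The $\tfrac{1}{n}$ terms cancel, yielding
\[ R_{G_n}[(a_1,b_1),(a_n,b_3)] = \tfrac{n-1}{4}+\tfrac{1}{4}R_{C^4_{P_n}}[a_1,a_n] + 2R_{C^2_{P_n}}[a_1,b] - \tfrac{1}{2}R_{C^2_{P_n}}[a_1,a_n].\]

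Denote the one-step increments (from $n$ to $n+1$) by $A_n, B_n, C_n$ for $R_{C^4_{P_n}}[a_1,a_n]$, $R_{C^2_{P_n}}[a_1,b]$, and $R_{C^2_{P_n}}[a_1,a_n]$ respectively; Lemmas~\ref{lem3.1} and~\ref{lem3.111} give $A_n,C_n>0$ and $B_n<0$. Subtracting yields
\[\Delta_n := R_{G_{n+1}}[(a_1,b_1),(a_{n+1},b_3)]-R_{G_n}[(a_1,b_1),(a_n,b_3)] = \tfrac{1}{4}+\tfrac{1}{4}A_n+2B_n-\tfrac{1}{2}C_n.\]
Setting $D_n^{(m)} := 2R_{C^m_{P_n}}[a_1,b]-R_{C^m_{P_n}}[a_1,a_n]$, the elementary identity $D_{n+1}^{(2)}-D_n^{(2)} = 2B_n-C_n$ lets us rewrite
\[\Delta_n = \left(\tfrac{1}{4}-D_n^{(2)}\right) + \tfrac{1}{4}A_n + D_{n+1}^{(2)} + \tfrac{1}{2}C_n.\]
By Lemmas~\ref{lem3.1} and~\ref{lem3.11}, the last three summands are strictly positive, so it suffices to show $\tfrac{1}{4}-D_n^{(2)}\ge 0$ for every $n\ge 2$. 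For $n\ge 3$ this follows from Lemma~\ref{lem3.11}: $D_n^{(2)}<2/2^n\le \tfrac{1}{4}$. For $n=2$ a direct triangle calculation on $C^2_{P_2}$ gives $R_{C^2_{P_2}}[a_1,a_2]=\tfrac{1}{2}$ and $R_{C^2_{P_2}}[a_1,b]=\tfrac{3}{8}$, so $D_2^{(2)}=\tfrac{1}{4}$ exactly and $\tfrac{1}{4}-D_2^{(2)}=0$. Hence $\Delta_n>0$ for all $n\ge 2$.

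The main obstacle is the algebraic rearrangement that groups $2B_n$ and $-\tfrac{1}{2}C_n$ into the telescoping block $\bigl(D_{n+1}^{(2)}-D_n^{(2)}\bigr)+\tfrac{1}{2}C_n$. Applying the bounds of Lemmas~\ref{lem3.1} and~\ref{lem3.111} separately yields only $\Delta_n>\tfrac{1}{4}-\tfrac{3}{2^n}$, which fails at $n=2,3$. The rearrangement crucially exploits the coincidence $D_2^{(2)}=\tfrac{1}{4}$, equal to the asymptotic constant of $\Delta_n$ from Theorem~\ref{Tm4.1}, which is what makes a uniform proof covering all $n\ge 2$ possible.
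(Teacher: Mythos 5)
Your proof is correct, and it takes a genuinely different route from the paper's. Both arguments begin with Theorem \ref{thm3.1} and both convert the $\lambda_p+4$ spectral sum into $\tfrac14 R_{C^4_{P_n}}[a_1,a_n]$ via Lemma \ref{lem3.6}; the divergence is in how the $\lambda_p+2$ sum is handled. The paper recognizes that sum as a ladder-graph quantity, obtaining $R_{G_n}[(a_1,b_1),(a_n,b_3)]=R_{L_n}[(a_1,c_1),(a_n,c_2)]+\tfrac14 R_{C^4_{P_n}}[a_1,a_n]-\tfrac{n-1}{4}$, and then positivity of the increment is immediate from Lemma \ref{lem3.3} (the ladder increment exceeds $\tfrac14$, a fact extracted from Cinkir's closed formula with Mathematica) together with Lemma \ref{lem3.1}. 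You instead push the $\lambda_p+2$ sum into the weighted cone $C^2_{P_n}$ via Eq.\ \eqref{eq4.16}, which leaves you with two negative one-step increments; your telescoping regrouping around $D_n^{(2)}=2R_{C^2_{P_n}}[a_1,b]-R_{C^2_{P_n}}[a_1,a_n]$, hinging on the exact coincidence $D_2^{(2)}=\tfrac14$, is not in the paper and checks out (your base identity reproduces $R_{G_2}[(a_1,b_1),(a_2,b_3)]=\tfrac56$ at $n=2$, and Lemma \ref{lem3.11} indeed gives $D_n^{(2)}<2/2^n\le\tfrac14$ for $n\ge 3$). What your route buys is independence from the ladder graph and from the computer-assisted Lemma \ref{lem3.3}: everything reduces to the cone-graph Lemmas \ref{lem3.1}, \ref{lem3.111}, \ref{lem3.11} plus one triangle computation. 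What it costs is the delicacy you yourself flag, since termwise bounds fail at $n=2,3$. One small point common to both proofs: the inequality is implicitly asserted for $n\ge 2$ only (it fails at $n=1$, where $R_{G_1}[(a_1,b_1),(a_1,b_3)]=1>\tfrac56=R_{G_2}[(a_1,b_1),(a_2,b_3)]$), which is the range actually needed in Theorem \ref{thm4.3}; you state this restriction explicitly, the paper does not.
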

\begin{proof}
Since $Q_{2}=C_{4}$ with $V(C_4)=\{b_1,b_2,b_3,b_4\}$ has three positive Laplacian eigenvalues $\nu_{1}=4$ and $\nu_{2}=\nu_{3}=2$ with respect to corresponding orthogonal eigenvectors:
$$\Upsilon_{1}=(\frac{1}{2},-\frac{1}{2},\frac{1}{2},-\frac{1}{2})^{T}, \ \Upsilon_{2}=(-\frac{1}{2},-\frac{1}{2},\frac{1}{2},\frac{1}{2})^{T}, \ \Upsilon_{3}=(-\frac{1}{2},\frac{1}{2},\frac{1}{2},-\frac{1}{2})^{T}.$$
Then using Theorem \ref{thm3.1}, we have
\begin{align}\label{eq4.1} R_{G_n}[(a_1,b_1),(a_n,b_3)]=\frac{n-1}{4}+\frac{1}{n}+\frac{1}{4}\sum_{p=1}^{n-1}\frac{\left(\Psi_{pa_{1}}-\Psi_{pa_{n}}\right)^2}{\lambda_{p}+4}+\frac{1}{2}\sum_{p=1}^{n-1}\frac{\left(\Psi_{pa_{1}}+\Psi_{pa_{n}}\right)^2}{\lambda_{p}+2}.
\end{align}
We continue to consider ladder graph $L_n$. Since $K_2$ has only one positive Laplacian eigenvalue $2$ with respect to eigenvector $(-\frac{\sqrt{2}}{2},\frac{\sqrt{2}}{2})^{T}$, again by Theorem \ref{thm3.1}, we have
\begin{equation}\label{eq4.3} R_{L_{n}}[(a_1,c_1),(a_n,c_2)]=\frac{n-1}{2}+\frac{1}{n}+\frac{1}{2}\sum_{p=1}^{n-1}\frac{\left(\Psi_{pa_{1}}+\Psi_{pa_{n}}\right)^2}{\lambda_{p}+2}.
\end{equation}
Consequently, according to Eqs. (\ref{eq20}), (\ref{eq4.1}) and (\ref{eq4.3}), we have
$$R_{G_{n}}[(a_{1},b_1),(a_{n},b_3)] = R_{L_{n}}[(a_{1},c_1),(a_{n},c_2)]+\frac{1}{4}R_{C^{4}_{P_{n}}}[a_1,a_n]-\frac{n-1}{4}.$$
Together with Lemmas \ref{lem3.3} and \ref{lem3.1}, we have
\begin{align}
&R_{G_{n+1}}[(a_{1},b_1),(a_{n+1},b_3)]- R_{G_{n}}[(a_{1},b_1),(a_{n},b_3)] \nonumber \\
&= \left(R_{L_{n+1}}[(a_{1},c_1),(a_{n+1},c_2)]-R_{L_{n}}[(a_{1},c_1),(a_{n},c_2)]\right)  \nonumber \\
& +\frac{1}{4}\big(R_{C^{4}_{P_{n+1}}}[a_1,a_{n+1}]-R_{C^{4}_{P_{n}}}[a_1,a_{n}]\big) -\frac{n}{4} +\frac{n-1}{4} \nonumber\\
&>\frac{1}{4}-\frac{1}{4}=0. \nonumber
\end{align}
This completes the proof.\end{proof}

\begin{thm} \label{thm4.3}
Let $G_n=P_{n}\square C_4$. Then
\begin{equation*}
RD(G_n)= \big\{[(a_1, b_1), (a_n, b_3)], [(a_1, b_2), (a_n, b_4)], [(a_1, b_3), (a_n, b_1)], [(a_1, b_4), (a_n, b_2)]\big\}.
\end{equation*}
\end{thm}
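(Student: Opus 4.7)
The plan is to show that every vertex pair in $G_n$ other than the four listed has resistance distance strictly smaller than $R^{\ast}:=R_{G_n}[(a_1,b_1),(a_n,b_3)]$. The automorphism group of $G_n$ contains the path-flip $a_i\leftrightarrow a_{n+1-i}$ together with the eight dihedral symmetries of $C_4$, and under this action the four listed pairs form a single orbit, hence share the common value $R^{\ast}$. Up to this group, any remaining pair has the form $[(a_i,b_1),(a_k,b_l)]$ with $1\leq i\leq k\leq n$, $l\in\{1,2,3\}$, and $(i,k,l)\neq(1,n,3)$.

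For the reduction to endpoint pairs, note that the subnetwork of $G_n$ induced on $\{a_i,\ldots,a_k\}\times V(C_4)$ is a copy of $G_{k-i+1}$, so Proposition~\ref{prop2.4} gives $R_{G_n}[(a_i,b_1),(a_k,b_l)]\leq R_{G_{k-i+1}}[(a_1,b_1),(a_{k-i+1},b_l)]$, strict whenever $(i,k)\neq(1,n)$. Iterating Lemma~\ref{lem4.6} shows $R_{G_m}[(a_1,b_1),(a_m,b_3)]$ is strictly increasing in $m$, which disposes of the case $l=3$. For $l\in\{1,2\}$ the problem reduces to the in-graph comparisons $R_{G_m}[(a_1,b_1),(a_m,b_l)]<R_{G_m}[(a_1,b_1),(a_m,b_3)]$ for every $m\geq 2$.

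For both comparisons I apply Theorem~\ref{thm3.1} with the $C_4$-eigenvectors displayed in the proof of Lemma~\ref{lem4.6}. The $l=1$ case rearranges to $R_{G_m}[(a_1,b_1),(a_m,b_3)]-R_{G_m}[(a_1,b_1),(a_m,b_1)]=R_{L_m}[(a_1,c_1),(a_m,c_2)]-R_{L_m}[(a_1,c_1),(a_m,c_1)]$, which Lemmas~\ref{lem3.2} and~\ref{lem3.4} evaluate to $\tfrac{2\sqrt 3}{a^m-b^m}>0$. For the $l=2$ case, using the identity $\sum_{p=1}^{m-1}\Psi_{pa_1}\Psi_{pa_m}/(\lambda_p+s)=R_{C^s_{P_m}}[a_1,b]-\tfrac{1}{ms}-\tfrac12 R_{C^s_{P_m}}[a_1,a_m]$ derivable from Lemma~\ref{lem3.6}, the difference collapses cleanly to
\[
R_{G_m}[(a_1,b_1),(a_m,b_3)]-R_{G_m}[(a_1,b_1),(a_m,b_2)]=\tfrac12\bigl(A_2-A_4\bigr),
\]
where $A_s:=2R_{C^s_{P_m}}[a_1,b]-R_{C^s_{P_m}}[a_1,a_m]$. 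Lemmas~\ref{lem3.2} and~\ref{lem3.4} yield the exact value $A_2=\tfrac{2\sqrt 3}{a^m-b^m}$, while Lemma~\ref{lem3.11} gives $A_4<\tfrac{2}{4^m}$. Since $a=2+\sqrt 3<4$, one has $A_2>A_4$ for all $m\geq 2$, so the difference is strictly positive.

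The main obstacle is the $l=2$ comparison: the $O(1/m)$ contributions from the two positive-eigenvalue eigenspaces of $C_4$ ($\nu=2$ and $\nu=4$) cancel, leaving only an exponentially small positive remainder. Securing its sign demands the exact ladder formula of Lemma~\ref{lem3.4} for the $\nu=2$ contribution, paired with the sharp exponential bound of Lemma~\ref{lem3.11} for the $\nu=4$ contribution; qualitative monotonicity of the cone-graph resistances alone would not suffice. Combining the three in-graph comparisons with Proposition~\ref{prop2.4} and Lemma~\ref{lem4.6} produces the strict inequality $R_{G_n}[(a_i,b_j),(a_k,b_l)]<R^{\ast}$ for every pair outside the listed four.
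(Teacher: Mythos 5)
Your endpoint comparisons are correct and essentially reproduce the paper's argument: the $l=1$ case is the ladder difference of Lemma \ref{lem3.4}, and your $l=2$ identity collapsing the difference to $\frac{1}{2}(A_2-A_4)$ with $A_s=2R_{C^{s}_{P_m}}[a_1,b]-R_{C^{s}_{P_m}}[a_1,a_m]$ is exactly the paper's Eq. (\ref{eq4.20}) in a cleaner packaging; the identity for $\sum_{p}\Psi_{pa_1}\Psi_{pa_m}/(\lambda_p+s)$ checks out against Lemma \ref{lem3.6}, and the final estimate $\frac{2\sqrt{3}}{a^m-b^m}>\frac{2}{4^m}$ via $2+\sqrt{3}<4$ is the same one the paper uses.

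There is, however, a genuine gap in your reduction for same-layer pairs, i.e. the case $i=k$. Your induced-subnetwork bound there lands on $G_{k-i+1}=G_1=C_4$ and gives only $R_{G_n}[(a_i,b_p),(a_i,b_q)]\leq R_{C_4}[b_p,b_q]$, which equals $1$ for opposite vertices; and the monotone chain from Lemma \ref{lem4.6} cannot be extended down to $m=1$, because in fact
\begin{equation*}
R_{G_1}[(a_1,b_1),(a_1,b_3)]=1>\tfrac{5}{6}=R_{G_2}[(a_1,b_1),(a_2,b_3)],
\end{equation*}
since $G_2=Q_3$ and its antipodal resistance is $\frac{5}{6}$ by Eq. (\ref{D}). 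So for $n=2$ your chain of inequalities does not close (the conclusion is still true, as the distance-$2$ resistance in $Q_3$ is $\frac{3}{4}<\frac{5}{6}$, but your bound of $1$ does not see this), and for larger $n$ you would still owe a verification that $R_{G_n}[(a_1,b_1),(a_n,b_3)]>1$. The paper repairs exactly this point by embedding any same-layer pair into a $G_2=Q_3$ subnetwork rather than a $C_4$ subnetwork and invoking the fact that the resistance diameter of $Q_3$ is attained at antipodal pairs, so that $R_{G_2}[(a_1,b_p),(a_1,b_q)]<R_{G_2}[(a_1,b_1),(a_2,b_3)]$, after which Lemma \ref{lem4.6} applies from $m=2$ upward. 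Add this (or an equivalent) step and your proof is complete; everything else stands.
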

\begin{proof}For $n=2$, $G_{2}=K_2\square C_{4}$ be a cube $Q_{3}$. According to Eq. (\ref{D}), we know
$$D_{r}(G_2)=R_{G_2}[(a_1, b_1), (a_2, b_3)]=\frac{5}{6}.$$
Now we consider $n\geq3$ and prove the following three claims.
\renewcommand{\theclaim}{1}
\begin{claim}\label{cla4.5}
\begin{equation*}
R_{G_n}[(a_1, b_1), (a_n, b_1)] < R_{G_n}[(a_1, b_1), (a_n, b_3)].
\end{equation*}
\end{claim}
\emph{Proof of Claim \ref{cla4.5}.}
Simple calculations via Theorem \ref{thm3.1}, we have
\begin{align}\label{eq4.13}
&R_{G_n}[(a_1,b_1),(a_n,b_3)]-R_{G_n}[(a_1,b_1),(a_n,b_1)] \nonumber \\
&= R_{L_n}[(a_1,c_1),(a_n,c_2)]-R_{L_n}[(a_1,c_1),(a_n,c_1)] =\frac{1}{n}+2\sum_{p=1}^{n-1}\frac{\Psi_{pa_{1}}\Psi_{pa_{n}}}{\lambda_{p}+2}.
\end{align}
Then according to Lemma \ref{lem3.4}, we have
\begin{equation*}\label{eq4.14}
R_{L_{n}}[(a_1,c_1),(a_{n},c_2)]-R_{L_{n}}[(a_1,c_1),(a_{n},c_1)]>0.
\end{equation*}
It follows that
\begin{equation*}
R_{G_n}[(a_1, b_1), (a_n, b_3)] > R_{G_n}[(a_1, b_1), (a_n, b_1)].
\end{equation*}
Hence Claim \ref{cla4.5} holds.\\
 \renewcommand{\theclaim}{2}
 \begin{claim}\label{cla4.6}
\begin{equation*}
R_{G_n}[(a_1, b_1), (a_n, b_2)] < R_{G_n}[(a_1, b_1), (a_n, b_3)].
\end{equation*}
\end{claim}
\emph{Proof of Claim \ref{cla4.6}.}
Similarly, by simple calculation of Theorem \ref{thm3.1}, we show
\begin{equation}\label{eq4.12} R_{G_n}[(a_1,b_1),(a_n,b_3)]-R_{G_n}[(a_1,b_1),(a_n,b_2)]=\frac{1}{4n}+\sum_{p=1}^{n-1}\frac{\Psi_{pa_{1}}\Psi_{pa_{n}}}{\lambda_{p}+2}-\sum_{p=1}^{n-1}\frac{\Psi_{pa_{1}}\Psi_{pa_{n}}}{\lambda_{p}+4}.
\end{equation}
Comparing Eqs. (\ref{eq20}) and (\ref{eq4.16}), then
\begin{equation}\label{eq4.133}
 2R_{C^{4}_{P_{n}}}[a_1,b]-R_{C^{4}_{P_{n}}}[a_1,a_n]=\frac{1}{2n}+2\cdot\sum_{p=1}^{n-1}\frac{\Psi_{pa_{1}}\Psi_{pa_{n}}}{\lambda_{p}+4}.
\end{equation}
Consequently, according to Eqs. (\ref{eq4.13})-(\ref{eq4.133}), we get
\begin{align}\label{eq4.20}
&R_{G_n}[(a_1,b_1),(a_n,b_3)]-R_{G_n}[(a_1,b_1),(a_n,b_2)] \nonumber \\
&= \frac{1}{2}\left(R_{L_n}[(a_1,c_1),(a_n,c_2)]-R_{L_n}[(a_1,c_1),(a_n,c_1)]\right)-\frac{1}{2}(2R_{C^{4}_{P_{n}}}[a_1,b]-R_{C^{4}_{P_{n}}}[a_1,a_n]).
\end{align}
By Lemma \ref{lem3.4}, for network $L_n$ we have
\begin{equation}\label{eq4.22}
R_{L_{n}}[(a_1,c_1),(a_{n},c_2)]-R_{L_{n}}[(a_1,c_1),(a_{n},c_1)]>\frac{2\sqrt{3}}{(2+\sqrt{3})^{n}}.
\end{equation}
For network $C^{4}_{P_n}$, it follows directly from Lemma \ref{lem3.11} that
\begin{equation}\label{eq4.24}
2R_{C^{4}_{P_{n}}}[a_1,b]-R_{C^{4}_{P_{n}}}[a_1,a_n]<\frac{2}{4^{n}}.
\end{equation}
Substituting Eqs. (\ref{eq4.22}) and (\ref{eq4.24}) into Eq. (\ref{eq4.20}), we have
\begin{equation*}
R_{G_n}[(a_1,b_1),(a_n,b_3)]>R_{G_n}[(a_1,b_1),(a_n,b_2)].
\end{equation*}
Thus Claim \ref{cla4.6} is proved.
\renewcommand{\theclaim}{3}
\begin{claim}\label{cla4.3}
For $0\leq j-i \leq n-2$ and $1\leq p,q \leq 4$, then
$$R_{G_n}[(a_i, b_p), (a_j, b_q)] < R_{G_n}[(a_1, b_1), (a_n, b_3)].$$
\end{claim}

\textit{Proof of Claim \ref{cla4.3}.} For convenience, we distinguish the following two cases.\\
\textit{Case 1.} $j-i=0$. \\In this case, note that cube $G_2$ be a subnetwork of $G_n$. Then by Proposition \ref{prop2.4}, we have
 \begin{equation}\label{eq4.6}
 R_{G_n}[(a_i, b_p), (a_i, b_q)] \leq R_{G_2}[(a_1, b_p), (a_1, b_q)].
  \end{equation}
Since  vertex pair $[(a_1, b_1), (a_2, b_3)]$ reach the resistance diameter of $G_2$, we have
  \begin{equation}\label{eqaa}
  R_{G_2}[(a_1, b_p), (a_1, b_q)]< R_{G_2}[(a_1, b_1), (a_2, b_3)].
  \end{equation}
Then by Lemma \ref{lem4.6}, for $n\geq3$, we have
 \begin{equation}\label{eq4.7}
 R_{G_{2}}[(a_{1},b_1),(a_{2},b_3)] < R_{G_{n}}[(a_{1},b_1),(a_{n},b_3)].
 \end{equation}
Comparing Eqs. (\ref{eq4.6})-(\ref{eq4.7}), we get
 \begin{equation*}
 R_{G_n}[(a_i, b_p), (a_i, b_q)] < R_{G_n}[(a_1, b_1), (a_n, b_3)].
 \end{equation*}
 \textit{Case 2.} $0 <j-i \leq n-2$. \\ Considering network $G_{j-i+1}$ be a subnetwork of $G_n$. Again by Proposition \ref{prop2.4}, we have
 \begin{equation}\label{eq4.8}
 R_{G_n}[(a_i, b_p), (a_j, b_q)] \leq  R_{G_{j-i+1}}[(a_1, b_p), (a_{j-i+1}, b_q)].
 \end{equation}
Together with Claim \ref{cla4.5} and \ref{cla4.6}, we can see that
 \begin{equation}\label{eqbb}
 R_{G_{j-i+1}}[(a_1, b_p), (a_{j-i+1}, b_q)]\leq  R_{G_{j-i+1}}[(a_1, b_1), (a_{j-i+1}, b_3)].
 \end{equation}
Since $j-i+1 < n$, by Lemma \ref{lem4.6}, we have
 \begin{equation}\label{eq4.9}
 R_{G_{j-i+1}}[(a_{1},b_1),(a_{j-i+1},b_3)]< R_{G_{n}}[(a_{1},b_1),(a_{n},b_3)].
 \end{equation}
 Comparing Eqs. (\ref{eq4.8})-(\ref{eq4.9}), it follows that
 \begin{equation*}
 R_{G_n}[(a_i, b_p), (a_j, b_q)] < R_{G_n}[(a_1, b_1), (a_n, b_3)].
 \end{equation*}
 Thus Claim \ref{cla4.3} is proved.\\
According to the symmetry of the network $G_n$, it is not to hard seen that
$$ R_{G_n}[(a_1, b_1), (a_n, b_3)] = R_{G_n}[(a_1, b_2), (a_n, b_4)]=R_{G_n}[(a_1, b_3), (a_n, b_1)]=R_{G_n}[(a_1, b_4), (a_n, b_2)].$$
By Claims \ref{cla4.5}-\ref{cla4.3}, we know that for $i, j\in \{1,2,...,n\}$ and $p, q \in \{1,2,3,4\}$, if
  \begin{align*}
[(a_i, b_p), (a_j, b_q)]\notin & \big\{ [(a_1, b_1), (a_n, b_3)], [(a_1, b_2), (a_n, b_4)],\\
   &[(a_1, b_3), (a_n, b_1)], [(a_1, b_4), (a_n, b_2)]\big\},
\end{align*}
then
$$R_{G_n}[(a_i, b_p), (a_j, b_q)] < R_{G_n}[(a_1, b_1), (a_n, b_3)].$$
Thus it follows that
\begin{equation*}
RD(G_n)= \big\{[(a_1, b_1), (a_n, b_3)], [(a_1, b_2), (a_n, b_4)], [(a_1, b_3), (a_n, b_1)], [(a_1, b_4), (a_n, b_2)]\big\}.
\end{equation*}
The proof is complete. \end{proof}
Finally, according to Theorems \ref{thm4.2} and \ref{thm4.3}, we could give an equivalent explanation of the Conjecture \ref{conj1} from the direction on resistance diameter.
\begin{cor}
 Let $G_n=P_{n}\square C_4$. Then
$$\lim_{n \rightarrow \infty}\big(D_r(G_{n+1})-D_r(G_{n}) \big)=\frac{1}{4}.$$
\end{cor}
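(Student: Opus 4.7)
The plan is essentially a one-line reduction: combine Theorem \ref{thm4.3} to pin down which pair of vertices realizes the resistance diameter, then invoke Theorem \ref{thm4.2} to evaluate the limit. Concretely, I would first note that Theorem \ref{thm4.3} gives an explicit description of $RD(G_n)$ for all $n\geq 2$, and in particular the pair $[(a_1,b_1),(a_n,b_3)]$ belongs to $RD(G_n)$. Consequently, by the very definition of resistance diameter,
\begin{equation*}
D_r(G_n)=R_{G_n}[(a_1,b_1),(a_n,b_3)] \quad \text{for all } n\geq 2.
\end{equation*}

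Applying the same identification to $G_{n+1}$ (using the pair $[(a_1,b_1),(a_{n+1},b_3)]$, which similarly lies in $RD(G_{n+1})$), I would rewrite
\begin{equation*}
D_r(G_{n+1})-D_r(G_n)=R_{G_{n+1}}[(a_1,b_1),(a_{n+1},b_3)]-R_{G_n}[(a_1,b_1),(a_n,b_3)].
\end{equation*}
Taking the limit as $n\to\infty$ and invoking Theorem \ref{thm4.2} then yields the desired value $\frac{1}{4}$, completing the proof. There is no genuine obstacle here, since all the nontrivial work has already been done: Theorem \ref{thm4.2} supplies the asymptotic behavior of the specific resistance distance, while Theorem \ref{thm4.3} provides the crucial structural fact that this particular resistance distance is actually the resistance diameter. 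The only thing worth being careful about is that Theorem \ref{thm4.3} applies for $n\geq 2$, which is harmless for the limit statement.
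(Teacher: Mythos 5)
Your proposal is correct and is exactly the paper's argument: the authors likewise deduce the corollary by combining Theorem \ref{thm4.3} (which shows $[(a_1,b_1),(a_n,b_3)]\in RD(G_n)$, so $D_r(G_n)=R_{G_n}[(a_1,b_1),(a_n,b_3)]$) with Theorem \ref{thm4.2}. Nothing is missing.
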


\section{Date availability}
No data was used for the research described in the article.
\section{Acknowledgments}
The support of the National Natural Science Foundation of China (through grant no. 12171414, 11571155) and Taishan Scholars Special Project of Shandong Province are greatly acknowledged.
\bibliographystyle{plain}

\end{CJK}
\end{document}